\numberwithin{equation}{section}
\theoremstyle{plain}
\newtheorem{thm}{Theorem}[section]
\newtheorem{theorem}[thm]{Theorem}
\newtheorem{thma}{Theorem}
\newtheorem*{corb}{Corollary B}
\newtheorem{corc}{Corollary}
\newtheorem{lemma}[thm]{Lemma}
\newtheorem{corollary}[thm]{Corollary}
\newtheorem{proposition}[thm]{Proposition}
\theoremstyle{definition}
\newtheorem{remark}[thm]{Remark}
\newtheorem{definition}[thm]{Definition}
\newtheorem{example}[thm]{Example}
\newtheorem{conjecture}[thm]{Conjecture}
\numberwithin{equation}{section}
\newtheorem{report}{report}
\newtheorem{exercise}{excercise}[section]
\renewcommand{\labelenumi}{\rm{(}\arabic{enumi}\rm{)}}
\def\subsection{\@startsection{subsection}{1}%
  \z@{.5\linespacing\@plus.7\linespacing}{-.5em}%
  {\normalfont\itshape}}
\def\subsection{\@startsection{subsection}{2}%
  \z@{.5\linespacing\@plus.7\linespacing}{.3\linespacing}%
  {\normalfont\bfseries}}
\DeclarePairedDelimiter{\abs}{\lvert}{\rvert}
\newcommand{\id}{\mathrm{id}}
\title[]{A {\large$1$}-cohomologically hyperbolic birational map of {\large$\mathbb{P}^3$}, with a transcendental arithmetic degree}
\author{Yutaro Sugimoto
}
\begin{document}
\maketitle

\begin{abstract}
 We construct a $1$-cohomologically hyperbolic birational map of $\mathbb{P}^3$, with transcendental first dynamical degree.
 The arithmetic degree of this map at a $\overline{\mathbb{Q}}$-point is transcendental.
\end{abstract}

\section{Introduction}
 Let $X$ be a smooth projective variety over an algebraic closed field $k$ with characteristic $0$.
 For a dominant rational map $f:X\dashrightarrow X$, define the $p$-th dynamical degree of $f$ as
 \begin{align*}
  \lambda_p(f)=\lim_{n\to\infty}((f^n)^{*}(H^p)\cdot H^{\mathrm{dim}X-p})^{\frac{1}{n}}
 \end{align*}
 for $0\leq p\leq \mathrm{dim} X$, where $H$ is an ample divisor (cf.\ \cite{DS05}, \cite{Tru15}).
 In this paper, we mainly consider the case $k=\overline{\mathbb{Q}}$.
 Birational maps, whose first dynamical degree is transcendental, are studied in \cite{BDJK24}, and they give an explicit example in that paper (cf.\ Section \ref{Birational maps}).\par
 Next, we introduce the arithmetic degree.
 Define the set
 \begin{align*}
  X(\overline{\mathbb{Q}})_f=\left\{x\in X(\overline{\mathbb{Q}})\;\middle|\; f^n(x) \text{ can be defined for all } n\geq0\right\}
 \end{align*}
 and for $x\in X(\overline{\mathbb{Q}})_f$, define the forward orbit of $x$ by
 \begin{align*}
  \mathcal{O}_f(x)=\left\{f^n(x)\;\middle|\; n\geq0\right\}.
 \end{align*}
 Let $h_X:X(\overline{\mathbb{Q}})\rightarrow[0,\infty)$ be a fixed logarithmic Weil height function (cf.\ \cite[Part B]{HS00}) and we set $h_X^{+}=\mathrm{max}\{1,h_X\}$. 
 Define the upper and lower arithmetic degree of $f$ at $x\in X(\overline{\mathbb{Q}})_f$ by
 \begin{align*}
  \overline{\alpha}_f(x)=\limsup_{n\to\infty} h_X^{+}(f^n(x))^{\frac{1}{n}}\\
  \underline{\alpha}_f(x)=\liminf_{n\to\infty} h_X^{+}(f^n(x))^{\frac{1}{n}}
 \end{align*}
 and if these values are equal, the value is called the arithmetic degree, and we write $\alpha_f(x)$.
 The next conjecture is proposed in \cite[Conjecture 1]{KS14}.
 \begin{conjecture}
  Let $X$ be a smooth projective variety over $\overline{\mathbb{Q}}$, and let $f:X\dashrightarrow X$ be a dominant rational map defined over $\overline{\mathbb{Q}}$ and let $P\in X(\overline{\mathbb{Q}})_f$.
  Then,
  \begin{enumerate}
  \renewcommand{\labelenumi}{\rm{(\alph{enumi})}}
  \renewcommand{\theenumi}{\alph{enumi}}
   \item $\alpha_f(P)$ exists.
   \item If $\mathcal{O}_f(P)$ is Zariski dense in $X$, then $\lambda_1(f)=\alpha_f(P)$.
   \item\label{(c)} $\alpha_f(P)$ is an algebraic integer.
   \item\label{(d)} The collection of arithmetic degrees $\left\{\alpha_f(Q)\;\middle|\; Q\in X(\overline{\mathbb{Q}})_f\right\}$ is finite.
  \end{enumerate}
 \end{conjecture}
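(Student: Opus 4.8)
Rather than proving this conjecture, the paper disproves part~\ref{(c)} (as its abstract announces), by producing a single $1$-cohomologically hyperbolic birational map $f\colon\mathbb{P}^3\dashrightarrow\mathbb{P}^3$ together with a point $P\in\mathbb{P}^3(\overline{\mathbb{Q}})_f$ at which $\alpha_f(P)$ is transcendental; here is how I would carry this out. The starting point is the explicit birational map of projective space with transcendental first dynamical degree from \cite{BDJK24} (discussed in Section~\ref{Birational maps}). The plan is either to adapt that example to $\mathbb{P}^3$ or to build an $f$ on $\mathbb{P}^3$ that reproduces its degree-growth mechanism, so that the degree sequence $d_n=\deg(f^n)$ obeys the same kind of orbit-driven recurrence and $\lambda_1(f)=\lim_n d_n^{1/n}$ is forced to be transcendental, while \emph{in addition} the inverse is kept tame, with $\deg(f^{-n})$ growing at a strictly smaller (a priori algebraic, e.g.\ integral) rate. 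The point of this second requirement is that on $\mathbb{P}^3$ one has $\lambda_0(f)=\lambda_3(f)=1$ and $\lambda_2(f)=\lambda_1(f^{-1})$, so $f$ is $1$-cohomologically hyperbolic exactly when $\lambda_1(f)>1$ and $\lambda_1(f)>\lambda_2(f)$; controlling $f^{-1}$ is what upgrades a mere ``transcendental $\lambda_1$'' to the statement in the title, and it is genuinely needed, since the naive constructions that preserve a fibration produce ``non-hyperbolic'' maps with $\lambda_1(f)=\lambda_2(f)$. One must also exhibit a birational model of $(\mathbb{P}^3,f)$ on which $f$ is algebraically stable, so that $\lambda_1(f)$ really is the spectral radius of $f^{*}$ on $H^2$ and is computed by $(d_n)$ rather than by some a priori smaller value; this is a three-dimensional analogue of the blow-up bookkeeping in \cite{BDJK24}.

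With such an $f$ in hand, the arithmetic part is comparatively standard. The general upper bound of Matsuzawa gives $\overline{\alpha}_f(P)\le\lambda_1(f)$ for every $P\in\mathbb{P}^3(\overline{\mathbb{Q}})_f$, and its proof does not care whether $\lambda_1(f)$ is algebraic. For the matching lower bound there are two routes: either invoke the known confirmation of the Kawaguchi--Silverman picture for $1$-cohomologically hyperbolic (in particular algebraically stable) birational maps, which gives $\underline{\alpha}_f(P)\ge\lambda_1(f)$ whenever $\mathcal{O}_f(P)$ is Zariski dense; or, more in keeping with the explicit nature of the example, choose $P\in\mathbb{P}^3(\overline{\mathbb{Q}})$ whose coordinates are ``generic'' -- lying outside the countably many proper subvarieties coming from the indeterminacy and exceptional loci of all iterates of $f$ and $f^{-1}$ and from the $f$-periodic subvarieties, which is possible since $\overline{\mathbb{Q}}$ is infinite -- and then check directly that no height cancellation occurs along the orbit, so that $h_X^{+}(f^n(P))$ grows at the rate $\lambda_1(f)$ and $\mathcal{O}_f(P)$ is Zariski dense. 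Either way, $\alpha_f(P)$ exists and equals the transcendental number $\lambda_1(f)$, contradicting part~\ref{(c)} while remaining consistent with (a) and (b).

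The main obstacle is the construction of $f$ and, inside it, the two intertwined demands: making the degree growth of $f$ transcendental while making that of $f^{-1}$ strictly smaller. The transcendence is delicate -- in \cite{BDJK24} it rests on a Diophantine/generating-function argument showing that the orbit data of a distinguished indeterminacy point cannot be simultaneously consistent with $\lambda_1$ being algebraic -- and reproducing it for a three-dimensional birational map while keeping the inverse in hand (so that the map is not merely non-hyperbolic) is the crux. Verifying algebraic stability of the chosen model and checking that the selected $\overline{\mathbb{Q}}$-point avoids all the relevant bad loci are routine in spirit but absorb most of the remaining technical work; by contrast, the passage from ``$\lambda_1(f)$ transcendental and $f$ $1$-cohomologically hyperbolic'' to ``$\alpha_f(P)$ transcendental'' is, given the existing height machinery for arithmetic degrees, comparatively formal.
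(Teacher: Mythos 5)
Your high-level strategy coincides with the paper's: observe that the \cite{BDJK24} example of a birational map of $\mathbb{P}^3$ with transcendental $\lambda_1$ is not automatically $1$-cohomologically hyperbolic, produce one that is (by controlling $\lambda_2(f)=\lambda_1(f^{-1})$), and then apply Theorem~\ref{MW22} to obtain a $\overline{\mathbb{Q}}$-point with Zariski-dense orbit at which $\alpha_f(P)=\lambda_1(f)$ is transcendental; for $d>3$ one forms a product with $\mathbb{P}^{d-3}$. Two differences are worth flagging. First, you speak of ``adapting'' the \cite{BDJK24} example to $\mathbb{P}^3$, but it is already a map of $\mathbb{P}^3$; what Section~\ref{2-cohomologically hyperbolic birational map} shows is that that particular $f_A$ is in fact $2$-cohomologically hyperbolic ($\lambda_2(f_A)\approx 223.66>\lambda_1(f_A)\leq 150$), so the fix is not to change the ambient variety but to replace the matrix $A$ within the same family $f_A=L_{B^{-1}}\circ h_{-I}\circ L_B\circ h_A$. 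The concrete mechanism your proposal leaves open is how to get one direction transcendental and the other algebraic: the \cite{BDJK24} transcendence criterion needs $A$ to have a dominant complex-conjugate pair of eigenvalues, whereas a single dominant real eigenvalue forces $\lambda_1$ to be algebraic and computable via a finite linear recurrence (cf.\ Remark~\ref{simultaneously}); the paper picks $A_1$ with characteristic polynomial $x^3+x^2-1$ (dominant complex pair), so that $A_1^{-1}$ has characteristic polynomial $x^3-x-1$ (dominant real root). Second, you propose either re-establishing algebraic stability on a suitable model or directly controlling height growth at a generic point; the paper does neither, instead reusing Proposition~\ref{equation for dynamical degree} (which already encapsulates the stability and blow-up bookkeeping from \cite{BDJK24}) to compute $\lambda_1$ and $\lambda_2$, and citing Theorem~\ref{MW22} as a black box for the arithmetic step. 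Your alternative direct generic-point route is not carried out in the paper and would be substantially harder than invoking \cite{MW22}.
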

 A counterexample for (\ref{(d)}) is found in \cite{LS20}, when $f$ is a birational map and $X=\mathbb{P}^4$,
 Also, a counterexample for (\ref{(c)}) is found in \cite{MW22}, when $f$ is a dominant rational map and $X=\mathbb{P}^2$, but the birational case is unknown.\par
 In Section \ref{2-cohomologically hyperbolic birational map}, we calculate the second dynamical degree of the example of birational map of $\mathbb{P}^3$ (\cite[Section 7.2]{BDJK24}), whose first dynamical degree is transcendental, and prove this map is $2$-cohomologically hyperbolic.\par
 Also, in Section \ref{1-cohomologically hyperbolic birational map}, we prove the next theorem.
 \begin{thma}\label{Main theorem}
  There exists a birational map $f:\mathbb{P}_{\overline{\mathbb{Q}}}^3\dashrightarrow \mathbb{P}_{\overline{\mathbb{Q}}}^3$, which is $1$-cohomologically hyperbolic, and $\lambda_1(f)$ is transcendental.
 \end{thma}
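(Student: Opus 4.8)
The plan is to take for $f$ the inverse of the birational map $f_0\colon\mathbb{P}^3_{\overline{\mathbb{Q}}}\dashrightarrow\mathbb{P}^3_{\overline{\mathbb{Q}}}$ of \cite[Section 7.2]{BDJK24} that is analysed in Section \ref{2-cohomologically hyperbolic birational map}. Recall that $f_0$ is birational, is defined over $\overline{\mathbb{Q}}$, has $\lambda_1(f_0)$ transcendental, and --- by the computation carried out in Section \ref{2-cohomologically hyperbolic birational map} --- is $2$-cohomologically hyperbolic with an explicitly determined $\lambda_2(f_0)$; since $f_0$ is birational one has $\lambda_0(f_0)=\lambda_3(f_0)=1$, so its $2$-cohomological hyperbolicity simply reads $\lambda_2(f_0)>\max\{1,\lambda_1(f_0)\}$. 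Set $f:=f_0^{-1}$, again a birational self-map of $\mathbb{P}^3_{\overline{\mathbb{Q}}}$. The theorem will follow once we transport the $2$-cohomological hyperbolicity of $f_0$ into $1$-cohomological hyperbolicity of $f$, and check that $\lambda_1(f)$ remains transcendental.

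For the hyperbolicity I would use the standard behaviour of dynamical degrees under inversion: they are birational invariants, and for a birational self-map $g$ of a smooth projective $n$-fold one has $\lambda_p(g^{-1})=\lambda_{n-p}(g)$ for every $0\le p\le n$ (this follows from $(g^{-n})^{*}=(g^{n})_{*}$ on cohomology together with the projection formula $\langle (g^{n})_{*}\omega,\,\eta\rangle=\langle\omega,\,(g^{n})^{*}\eta\rangle$; see e.g.\ \cite{DS05}, \cite{Tru15}). Applying this with $g=f_0$ and $n=3$ gives $\lambda_0(f)=\lambda_3(f)=1$, $\lambda_1(f)=\lambda_2(f_0)$ and $\lambda_2(f)=\lambda_1(f_0)$. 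Combining with the inequalities $\lambda_2(f_0)>1$ and $\lambda_2(f_0)>\lambda_1(f_0)$ of Section \ref{2-cohomologically hyperbolic birational map}, we obtain $\lambda_1(f)>1=\lambda_0(f)=\lambda_3(f)$ and $\lambda_1(f)>\lambda_2(f)$, so $\lambda_1(f)$ is the strict maximum of $\lambda_0(f),\dots,\lambda_3(f)$: that is, $f$ is $1$-cohomologically hyperbolic.

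It then remains to prove that $\lambda_1(f)=\lambda_2(f_0)$ is transcendental, and this is the step I expect to carry the real weight, since transcendence of $\lambda_1(f_0)$ does not by itself force transcendence of $\lambda_2(f_0)$. Here I would feed the explicit value of $\lambda_2(f_0)$ obtained in Section \ref{2-cohomologically hyperbolic birational map} into a transcendence argument: if that value has the form $\lambda_2(f_0)=c\,\lambda_1(f_0)^{k}$ with $c\in\overline{\mathbb{Q}}^{\times}$ and $k\ge 1$, transcendence is immediate from that of $\lambda_1(f_0)$; more likely $\lambda_2(f_0)$ appears as the growth rate of the second-degree sequence $(f_0^{n})^{*}H^{2}\cdot H$, i.e.\ as the reciprocal of the radius of convergence of a lacunary/aperiodic generating series attached to $f_0$, in which case one reruns the Liouville-type transcendence argument of \cite{BDJK24} with this series in place of the one controlling $\deg(f_0^{n})$. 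Either way we get $\lambda_2(f_0)\notin\overline{\mathbb{Q}}$, and the theorem follows. The principal obstacle is therefore not the cohomological hyperbolicity --- which is a formal consequence of Section \ref{2-cohomologically hyperbolic birational map} through the inversion symmetry --- but making the degree-growth bookkeeping for $f_0$ precise enough that the transcendence input applies to $\lambda_2(f_0)$ as cleanly as it did to $\lambda_1(f_0)$.
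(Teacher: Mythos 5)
Your plan does not work, and the reason is concrete: the quantity $\lambda_2(f_0)$ that your argument needs to be transcendental is in fact \emph{algebraic}, and the paper explicitly computes it. In Section~\ref{2-cohomologically hyperbolic birational map}, using $\lambda_2(f_A)=\lambda_1(f_A^{-1})$ together with Proposition~\ref{equation for dynamical degree} applied to $A^{-1}$, one shows that $\lambda'=\lambda_2(f_A)$ satisfies the polynomial equation $\lambda'^3-224\lambda'^2+75\lambda'=0$, giving $\lambda'=223.6646\cdots$, a root of a rational cubic. Your bookkeeping $\lambda_p(f_0^{-1})=\lambda_{3-p}(f_0)$ is correct and does show $f_0^{-1}$ is $1$-cohomologically hyperbolic, but it yields $\lambda_1(f_0^{-1})=\lambda_2(f_0)$ algebraic and $\lambda_2(f_0^{-1})=\lambda_1(f_0)$ transcendental — exactly the wrong way around for the theorem. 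The underlying reason this must happen is that, for the matrix $A$ of Theorem~\ref{transcendental dynamical degree}, the inverse $A^{-1}$ has a \emph{real} eigenvalue of maximal modulus ($\sigma_1=14.52\cdots$), whereas the transcendence mechanism of \cite{BDJK24} requires the dominant eigenvalue to be a complex conjugate pair whose argument is an irrational multiple of $\pi$. When the dominant eigenvalue is real, the degree generating series sums to a rational function, forcing $\lambda_1(f_{A^{-1}})$ algebraic; this is precisely Remark~\ref{simultaneously}.

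The paper therefore does not obtain the theorem by inverting $f_0$. Instead it introduces a fresh matrix $A_1=\begin{pmatrix}56 & -19 & -17 \\ -16 & 6 & 5 \\ 207 & -71 & -63\end{pmatrix}$ with characteristic polynomial $x^3+x^2-1$, chosen so that the complex conjugate pair of eigenvalues is the \emph{largest} in modulus (guaranteeing a chance at transcendental $\lambda_1(f_{A_1})$ via Baker–W\"ustholz, which has to be re-verified from scratch for $A_1$: the unit-group conditions~(i)–(ii) and the cone condition~(iii) of \cite{BDJK24}), while $A_1^{-1}$ has characteristic polynomial $x^3-x-1$ with a real dominant root, making $\lambda_2(f_{A_1})=\lambda_1(f_{A_1^{-1}})=174.66\cdots$ algebraic and — this is the numerical input you cannot get for free — \emph{strictly smaller} than the lower bound $\lambda_1(f_{A_1})\ge\Psi_{\mathcal{U},\mathcal{V}}(A_1)=291$. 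That inequality, rather than any formal symmetry, is what makes $f_{A_1}$ $1$-cohomologically hyperbolic with transcendental $\lambda_1$. In short, the inversion trick trades $\lambda_1\leftrightarrow\lambda_2$ but cannot move the transcendence to the top degree, and constructing a new matrix with the degree-growth coming out in the right order is the genuine content of the proof.
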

 This map provides us with the next corollary (cf.\ Remark \ref{maximal dynamical degree is transcendental}), which is the answer for the first half of \cite[Question 1.6]{BDJK24}.
 But the last half of \cite[Question 1.6]{BDJK24} is hard to prove in this way (see Remark \ref{simultaneously}).
 \begin{corb}
  For any $d\geq3$, there exists a birational map $f:\mathbb{P}_{\overline{\mathbb{Q}}}^d\dashrightarrow \mathbb{P}_{\overline{\mathbb{Q}}}^d$, whose first dynamical degree is transcendental, and is maximal (not necessarily strictly large) among the dynamical degrees.
 \end{corb}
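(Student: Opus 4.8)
The plan is to deduce Corollary B from Theorem A by a simple product trick together with the birational invariance of dynamical degrees. Write $f_0\colon\mathbb{P}^3_{\overline{\mathbb{Q}}}\dashrightarrow\mathbb{P}^3_{\overline{\mathbb{Q}}}$ for the birational map furnished by Theorem A; being $1$-cohomologically hyperbolic means $\lambda_1(f_0)=\max_{0\le i\le 3}\lambda_i(f_0)$ with the maximum attained only at $i=1$, and $\lambda_1(f_0)$ is transcendental. For $d=3$ there is nothing to prove, so assume $d>3$. Set $Y=\mathbb{P}^3_{\overline{\mathbb{Q}}}\times\mathbb{P}^{d-3}_{\overline{\mathbb{Q}}}$ and let $F=f_0\times\mathrm{id}_{\mathbb{P}^{d-3}}\colon Y\dashrightarrow Y$, a birational self-map of $Y$ (with inverse $f_0^{-1}\times\mathrm{id}_{\mathbb{P}^{d-3}}$).

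First I would read off the dynamical degrees of $F$ from the product formula $\lambda_p(g\times h)=\max_{i+j=p}\lambda_i(g)\lambda_j(h)$, which follows from the theory of relative dynamical degrees (cf.\ \cite{DS05}, \cite{Tru15}; one may also reduce to $k=\mathbb{C}$). Since $\lambda_j(\mathrm{id}_{\mathbb{P}^{d-3}})=1$ for all admissible $j$, this gives
\[
 \lambda_p(F)=\max\bigl\{\lambda_i(f_0)\ :\ \max(0,\,p-(d-3))\le i\le\min(3,p)\bigr\}\qquad(0\le p\le d).
\]
For $p=1$ the constraint reads $0\le i\le 1$, so $\lambda_1(F)=\max\{\lambda_0(f_0),\lambda_1(f_0)\}=\lambda_1(f_0)$ is transcendental; and for every $p$ the index set on the right lies in $\{0,1,2,3\}$, hence $\lambda_p(F)\le\max_{0\le i\le 3}\lambda_i(f_0)=\lambda_1(f_0)=\lambda_1(F)$. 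Thus $\lambda_1(F)$ is maximal among the dynamical degrees of $F$.

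It then remains to move $F$ onto $\mathbb{P}^d$. Choose a birational map $\varphi\colon Y\dashrightarrow\mathbb{P}^d_{\overline{\mathbb{Q}}}$ defined over $\overline{\mathbb{Q}}$ — e.g.\ the one identifying an affine chart $\mathbb{A}^3\times\mathbb{A}^{d-3}\subset Y$ with an affine chart $\mathbb{A}^d\subset\mathbb{P}^d$ — and put $f=\varphi\circ F\circ\varphi^{-1}$, a birational self-map of $\mathbb{P}^d_{\overline{\mathbb{Q}}}$. Since dynamical degrees are invariant under birational conjugacy (cf.\ \cite{DS05}, \cite{Tru15}), $\lambda_p(f)=\lambda_p(F)$ for all $p$, so $\lambda_1(f)=\lambda_1(f_0)$ is transcendental and maximal among $\lambda_0(f),\dots,\lambda_d(f)$, which is Corollary B.

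I do not anticipate a real obstacle: the two inputs — the product formula and the birational invariance of the dynamical degrees — are standard, and the only care needed is to record that both hold over $\overline{\mathbb{Q}}$, which one obtains by flat base change to $\mathbb{C}$ (the degrees being limits of intersection numbers of pullbacks of hyperplane classes). The genuine limitation, and the reason this argument settles only the first half of \cite[Question 1.6]{BDJK24}, is that as soon as $d\ge 4$ the index $i=1$ is also admissible for $p=2$ in the formula above, so $\lambda_2(F)=\lambda_1(f_0)=\lambda_1(F)$ and $F$ (hence $f$) is \emph{never} $1$-cohomologically hyperbolic on $\mathbb{P}^d$ for $d\ge 4$; making the transcendental degree strictly maximal there would require a genuinely different construction (see Remark \ref{simultaneously}).
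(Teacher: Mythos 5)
Your proof is correct and takes essentially the same route as the paper: the paper also forms $f_{A_1}\times\mathrm{id}$ on $\mathbb{P}^3\times\mathbb{P}^{d-3}$, transports it to $\mathbb{P}^d$ by a birational map, invokes birational invariance of dynamical degrees, and then reads off the full list $(1,\mu,\dots,\mu,\nu,1)$ from the product formula, exactly as you do. The only (minor) difference is bibliographic: for the product formula $\lambda_p(g\times h)=\max_{i+j=p}\lambda_i(g)\lambda_j(h)$ the paper points to the relative dynamical degree theorem \cite[Theorem 1.1, Example 3.7]{DN11} and \cite[Corollary 2.5]{San20} (which treat the rational-map case directly), whereas you cite \cite{DS05} and \cite{Tru15} more loosely — citing \cite{DN11}/\cite{San20} would be sharper — and your closing observation that $\lambda_2(F)=\lambda_1(F)$ for $d\ge 4$ matches Remark \ref{maximal dynamical degree is transcendental} and Remark \ref{simultaneously}.
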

 By combining with the results of \cite{MW22}, we prove the next corollary in the last part of Section \ref{1-cohomologically hyperbolic birational map}, and this is a counterexample for (\ref{(c)}) on birational maps of $\mathbb{P}^3$.
 \setcounter{corc}{2}
 \begin{corc}\label{corollary of Main theorem}
  For any $d\geq3$, there exist a birational map $\phi:\mathbb{P}^d\dashrightarrow\mathbb{P}^d$ over $\overline{\mathbb{Q}}$ and a point $P\in\mathbb{P}^{d}(\overline{\mathbb{Q}})_\phi$ such that its arithmetic degree $\alpha_\phi(P)$ is transcendental.\par
  Moreover, if $d=3$, we can take $P\in\mathbb{P}^3(\overline{\mathbb{Q}})_{\phi}$ as $\overline{\mathcal{O}_\phi(P)}=\mathbb{P}^3$.
 \end{corc}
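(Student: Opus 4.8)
The plan is to deduce Corollary~\ref{corollary of Main theorem} from Theorem~\ref{Main theorem} combined with the arithmetic results of \cite{MW22} on $1$-cohomologically hyperbolic maps, and then to propagate the $\mathbb{P}^3$-example to $\mathbb{P}^d$ by a suspension construction. First I would fix the birational map $f\colon\mathbb{P}_{\overline{\mathbb{Q}}}^3\dashrightarrow\mathbb{P}_{\overline{\mathbb{Q}}}^3$ given by Theorem~\ref{Main theorem}: being $1$-cohomologically hyperbolic it satisfies $\lambda_1(f)>\lambda_p(f)$ for every $p\neq1$ (so $\lambda_1(f)>1=\lambda_0(f)=\lambda_3(f)$ and $\lambda_1(f)>\lambda_2(f)$), and $\lambda_1(f)$ is transcendental. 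For such an $f$ the needed part of the Kawaguchi--Silverman picture is available: by \cite{MW22} --- the existence of a Zariski-dense orbit for a $1$-cohomologically hyperbolic self-map together with the validity of part~(b) of the conjecture above for such maps --- there is a point $P\in\mathbb{P}^3(\overline{\mathbb{Q}})_f$ with $\overline{\mathcal{O}_f(P)}=\mathbb{P}^3$ and $\alpha_f(P)=\lambda_1(f)$. Since $\lambda_1(f)$ is transcendental, taking $\phi=f$ and this $P$ already proves the statement for $d=3$, including the ``moreover'' clause.

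For $d>3$ I would then take $\phi\colon\mathbb{P}^d\dashrightarrow\mathbb{P}^d$ to be the suspension of $f$: the birational extension to $\mathbb{P}^d$ of $f\times\mathrm{id}_{\mathbb{A}^{d-3}}$ acting on the affine chart $\mathbb{A}^d=\mathbb{A}^3\times\mathbb{A}^{d-3}$ (equivalently, $f\times\mathrm{id}_{\mathbb{P}^{d-3}}$ transported through a fixed birational identification $\mathbb{P}^3\times\mathbb{P}^{d-3}\dashrightarrow\mathbb{P}^d$), which is the kind of map already used for Corollary~B; one also has $\lambda_1(\phi)=\lambda_1(f)$, still transcendental, by the product formula for dynamical degrees and the $1$-cohomological hyperbolicity of $f$. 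After replacing $f$ by a generic linear conjugate so that the countable set $\mathcal{O}_f(P)$ misses the hyperplane at infinity of the chart --- possible since $\mathbb{P}^3(\overline{\mathbb{Q}})$ is not a countable union of hyperplanes --- the point $\widetilde{P}=(P,0)\in\mathbb{A}^3\times\mathbb{A}^{d-3}\subset\mathbb{P}^d$ satisfies $\phi^n(\widetilde{P})=(f^n(P),0)$ for all $n\geq0$, so $\widetilde{P}\in\mathbb{P}^d(\overline{\mathbb{Q}})_\phi$; and since appending zero coordinates changes the Weil height of an affine point only by $O(1)$, one obtains $\alpha_\phi(\widetilde{P})=\alpha_f(P)=\lambda_1(f)$, transcendental. (Invariantly, this also follows from $\alpha_{f\times g}\big((x,y)\big)=\max\{\alpha_f(x),\alpha_g(y)\}$ together with the birational-conjugacy invariance of the arithmetic degree.)

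The real content lies in the two inputs --- the construction of $f$ in Theorem~\ref{Main theorem} (carried out in Section~\ref{1-cohomologically hyperbolic birational map}) and the results of \cite{MW22} --- so the step to check with most care is that those arithmetic results genuinely apply here: that a $1$-cohomologically hyperbolic \emph{birational} self-map of $\mathbb{P}^3$ over $\overline{\mathbb{Q}}$ admits a $\overline{\mathbb{Q}}$-point with Zariski-dense forward orbit whose arithmetic degree realises $\lambda_1$. Granting this, what remains is routine bookkeeping in the suspension step --- verifying that the forward orbit of $\widetilde{P}$ avoids every indeterminacy locus and that the heights match. Note that for $d\geq4$ the map $\phi$ is no longer $1$-cohomologically hyperbolic and $\mathcal{O}_\phi(\widetilde{P})$ is not Zariski dense, which is precisely why the ``moreover'' part is claimed only for $d=3$ (cf.\ Remark~\ref{simultaneously}).
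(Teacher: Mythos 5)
Your overall route is the same as the paper's: for $d=3$, apply Theorem~\ref{MW22} to the $1$-cohomologically hyperbolic $f$ of Theorem~\ref{Main theorem} to get $P$ with $\overline{\mathcal{O}_f(P)}=\mathbb{P}^3$ and $\alpha_f(P)=\lambda_1(f)$ transcendental; for $d>3$, suspend $f$ via $f\times\mathrm{id}$ composed with a birational identification $\mathbb{P}^3\times\mathbb{P}^{d-3}\dashrightarrow\mathbb{P}^d$, with the suspended point $\widetilde{P}$ tracking $\mathcal{O}_f(P)$ and with $\alpha_\phi(\widetilde{P})=\alpha_f(P)$ by height comparison. Your remark that the ``moreover'' clause is restricted to $d=3$ because the suspended orbit lies in a $3$-dimensional subvariety also matches the paper.

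The one genuine problem is the justification of your ``generic linear conjugate'' step. You want a hyperplane over $\overline{\mathbb{Q}}$ missing the countable set $\mathcal{O}_f(P)$, and you assert this is ``possible since $\mathbb{P}^3(\overline{\mathbb{Q}})$ is not a countable union of hyperplanes.'' That statement is false: $\mathbb{P}^3(\overline{\mathbb{Q}})$ is itself countable, so it trivially \emph{is} a countable union of hyperplanes (take, for each of its countably many points, any hyperplane through that point). The Baire/uncountable-field argument you are implicitly invoking does not transfer to $\overline{\mathbb{Q}}$, where a countable union of proper closed subvarieties can very well exhaust all $\overline{\mathbb{Q}}$-points. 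You should instead use the fact that $f$ and $P$ are defined over a fixed number field $K$, so $\mathcal{O}_f(P)\subset\mathbb{P}^3(K)$; choosing a hyperplane $\sum_i c_i z_i = 0$ with coefficients $c_0,\ldots,c_3\in\overline{\mathbb{Q}}$ linearly independent over $K$ then guarantees it avoids every $K$-rational point, hence avoids the whole orbit. (For comparison, the paper simply asserts that the forward orbit of the corresponding point in $\mathbb{P}^d$ is well-defined under its explicit birational identification, without spelling out a check; your instinct that this well-definedness deserves an argument is sound, but the argument you give does not work as stated.)

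Everything else is fine: the use of Theorem~\ref{MW22}, the identity $\lambda_1(\phi)=\lambda_1(f)$ for the suspension (cf.\ Remark~\ref{maximal dynamical degree is transcendental}), the height bookkeeping $\alpha_\phi(\widetilde{P})=\alpha_f(P)$, and the observation that $\phi$ fails to be cohomologically hyperbolic and $\mathcal{O}_\phi(\widetilde{P})$ fails to be Zariski dense for $d>3$.
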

 Besides, we use SageMath \cite{sagemath} as a programming language in this paper.
 This language able us to calculate the unit group of a number field, and the absolute logarithmic height of an algebraic number.\vspace{10pt}\\
 \noindent
 {\bf Acknowledgements.} The author thanks Professor Keiji Oguiso and Long Wang for helpful advice on this paper.
 He also thanks Yuta Takada for verifying the calculation results by programming.

\section{Preliminaries}
 \subsection{Dynamical degrees}\label{Dynamical degrees}
  Let $X_1,X_2$ be smooth projective varieties, let $f:X_1\dashrightarrow X_2$ be a birational map, let $\alpha\in A_k(X_1)$ and $\beta\in A_l(X_2)$ be cycle classes with $k+l=n=\mathrm{dim}(X_1)=\mathrm{dim}(X_2)$.
  Also, we write the structure morphisms by $p_i:X_i\rightarrow S$.
  Take the Zariski closure $\Gamma_f\subset X_1\times X_2$ of the graph of $f$, and define the projections $\pi_1:X_1\times X_2\rightarrow X_1$, $\pi_2:X_1\times X_2\rightarrow X_2$.
  The pullback $f^{*}\beta$ is  defined as ${\pi_1}_{*}([\Gamma_f]\cdot\pi_2^{*}\beta)$, and the pushforward $f_{*}\alpha$ is  defined as ${\pi_2}_{*}([\Gamma_f]\cdot\pi_1^{*}\alpha)$ (cf.\ \cite[Definition 16.1.2]{Ful98}).
  Then, the intersection of cycles can be calculated as
  \begin{align*}
   {p_1}_{*}(f^{*}(\beta)\cdot\alpha)&={p_1}_{*}({\pi_1}_{*}([\Gamma_f]\cdot{\pi_2}^{*}\beta)\cdot\alpha)\\
                                     &=({p_1}_{*}\circ{\pi_1}_{*})(([\Gamma_f]\cdot{\pi_2}^{*}\beta)\cdot{\pi_1}^{*}\alpha)\\
                                     &=({p_2}_{*}\circ{\pi_2}_{*})(([\Gamma_f]\cdot{\pi_1}^{*}\alpha)\cdot{\pi_2}^{*}\beta)\\
                                     &={p_2}_{*}({\pi_2}_{*}([\Gamma_f]\cdot{\pi_1}^{*}\alpha)\cdot\beta)\\
                                     &={p_2}_{*}(f_{*}(\alpha)\cdot\beta)
  \end{align*}\medskip
  in $A_0(S)\simeq\mathbb{Z}$, by the push-pull formula.\par
  Thus, for the case $X=X_1=X_2$ and $\mathrm{dim}(X)=3$, $\lambda_0(f)=\lambda_3(f)=1$ and
  \begin{align*}
   \lambda_2(f)&=\lim_{n\to\infty}((f^n)^{*}(H\cdot H)\cdot H)^{\frac{1}{n}}\\
               &=\lim_{n\to\infty}((H\cdot H\cdot (f^n)_{*}(H))^{\frac{1}{n}}\\
               &=\lim_{n\to\infty}((H\cdot H\cdot (f^{-n})^{*}(H))^{\frac{1}{n}}\\
               &=\lambda_1(f^{-1}).
  \end{align*}\par
  For the case $X=\mathbb{P}^N$ and its rational map $f:X\dashrightarrow X$ which is represented as 
  \begin{align*}
  \begin{array}{cccc}
   f: & \mathbb{P}^N & \dashrightarrow & \mathbb{P}^N\\
   & [x_0:x_1:\cdots:x_N] & \mapsto & [\phi_0:\phi_1:\cdots:\phi_N]
  \end{array}
  \end{align*}
  for homogeneous polynomials $\phi_i$ of the same degree $d\in\mathbb{Z}_{>0}$ ($\phi_0,\phi_1,\ldots,\phi_N$ do not have any common factor).
  Then,
  \begin{align*}
   ((f)^{*}(H)\cdot H^{N-1})=(dH\cdot H^{N-1})=d
  \end{align*}
  and define $\mathrm{deg}(f):=d=\mathrm{deg}(\phi_i)$ in this paper.
  Thus, the first dynamical degree $\lambda_1(f)$ can be considered as the growth rate of the sequence $\mathrm{deg}(f),\mathrm{deg}(f^2),\ldots,\mathrm{deg}(f^n),\ldots$ and so $\lambda_1(f)\leq\mathrm{deg}(f)$.
  Now the rational map $f:\mathbb{P}_{\overline{\mathbb{Q}}}^N\dashrightarrow\mathbb{P}_{\overline{\mathbb{Q}}}^N$ can be extended to $f_\mathbb{C}:\mathbb{P}_\mathbb{C}^N\dashrightarrow\mathbb{P}_\mathbb{C}^N$ and $\lambda_1(f)=\lambda_1(f_\mathbb{C})$ holds.\par
  Also, for a birational map $f:X\dashrightarrow X$, $\lambda_0(f)=\lambda_{\mathrm{dim}(X)}(f)=1$ holds (cf.\ \cite{Tru15}).\par
  It is known that dynamical degrees are birational invariant, which can be stated as below.
  \begin{theorem}[{cf.\ \cite[Corollaire 7]{DS05}, \cite[Corollary 1.2]{DN11}, \cite[Lemma 3.7]{Tru15}}]\label{birational conjugate}
   Let $X$ and $Y$ be smooth projective varieties of the same dimension $n$ over an algebraic closed field of characteristic zero.
   Let $f:X\dashrightarrow X$ and $g:Y\dashrightarrow Y$ be dominant rational maps and let $\pi:X\dashrightarrow Y$ be a birational map with $\pi\circ f=g\circ\pi$.
   Then, $\lambda_p(f)=\lambda_p(g)$ for all $0\leq p\leq\mathrm{dim}(X)$.
  \end{theorem}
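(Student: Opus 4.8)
\emph{Sketch of proof.}
Since $\pi$ is birational, $\pi^{-1}$ exists with $\pi^{-1}\circ\pi=\id_{X}$, so the relation $\pi\circ f=g\circ\pi$ gives $g=\pi\circ f\circ\pi^{-1}$ and hence, inductively, $g^{n}=\pi\circ f^{n}\circ\pi^{-1}$ for every $n\ge 1$. Fix once and for all ample divisors $H_{X}$ on $X$ and $H_{Y}$ on $Y$; for a dominant rational map $h:Z_{1}\dashrightarrow Z_{2}$ with $Z_{1},Z_{2}\in\{X,Y\}$ write $\delta_{p}(h)=\bigl(h^{*}(H_{Z_{2}}^{\,p})\cdot H_{Z_{1}}^{\,n-p}\bigr)$, so that by definition $\lambda_{p}(f)=\lim_{n\to\infty}\delta_{p}(f^{n})^{1/n}$ and $\lambda_{p}(g)=\lim_{n\to\infty}\delta_{p}(g^{n})^{1/n}$. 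The plan is to produce a constant $C\ge 1$ \emph{independent of $n$} such that
\begin{align*}
 C^{-1}\,\delta_{p}(f^{n})\ \le\ \delta_{p}(g^{n})\ \le\ C\,\delta_{p}(f^{n})\qquad(n\ge 1);
\end{align*}
taking $n$-th roots and letting $n\to\infty$ then forces $\lambda_{p}(g)=\lambda_{p}(f)$, since $C^{\pm 1/n}\to 1$. (For $p=0$ both sides are $1$, and for $p=n$ both equal the topological degree, which is obviously conjugation-invariant; so the essential range is $1\le p\le n-1$.)

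The key ingredient I would invoke is the submultiplicativity of the $p$-degree under composition: for smooth projective varieties of dimension $n$ over an algebraically closed field of characteristic $0$, with fixed polarizations, there is a constant $c\ge 1$ depending only on the varieties and polarizations — not on the maps — such that $\delta_{p}(h_{1}\circ h_{2})\le c\,\delta_{p}(h_{1})\,\delta_{p}(h_{2})$ for all composable dominant rational maps $h_{1},h_{2}$. Granting this, I would apply it twice: with $h_{1}=\pi$, $h_{2}=f^{n}\circ\pi^{-1}$ it gives $\delta_{p}(g^{n})\le c\,\delta_{p}(\pi)\,\delta_{p}(f^{n}\circ\pi^{-1})$, and with $h_{1}=f^{n}$, $h_{2}=\pi^{-1}$ it gives $\delta_{p}(f^{n}\circ\pi^{-1})\le c\,\delta_{p}(f^{n})\,\delta_{p}(\pi^{-1})$. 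As $\delta_{p}(\pi)$ and $\delta_{p}(\pi^{-1})$ are fixed numbers, the upper bound follows with $C=c^{2}\,\delta_{p}(\pi)\,\delta_{p}(\pi^{-1})$; the lower bound follows by running the same computation on $f=\pi^{-1}\circ g\circ\pi$. One may harmlessly reduce to $k=\mathbb{C}$ beforehand, as the intersection numbers $\delta_{p}(\cdot)$, and hence the $\lambda_{p}$, are preserved under any extension of algebraically closed base fields.

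The only real obstacle is precisely this submultiplicativity inequality: pullback is not functorial for rational maps — in general $(h_{1}\circ h_{2})^{*}\neq h_{2}^{*}\circ h_{1}^{*}$ on $N^{p}$ — and bounding the discrepancy by a \emph{uniform} constant is the technical core of the theory. Over $\mathbb{C}$ this is the Dinh–Sibony estimate, established by regularizing positive closed $(p,p)$-currents and transporting them through a resolution of the graph; in the purely algebraic setting it is due to Dang and to Truong, via intersection theory on a common resolution together with a Siu-type positivity (log-concavity) estimate for nef classes. I would quote this result — as recorded in \cite{DS05}, \cite{DN11}, \cite{Tru15} — rather than reprove it, and only assemble the short conjugation argument above. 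It is worth stressing that the same inequality, applied with $h_{1}=f^{m}$ and $h_{2}=f^{n}$, is exactly what makes the limits defining $\lambda_{p}(f)$ and $\lambda_{p}(g)$ exist in the first place (Fekete's subadditivity lemma applied to $\log\delta_{p}(f^{n})$), so the statement is not vacuous.
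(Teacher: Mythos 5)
The paper does not prove this theorem at all: it is quoted as a known result with citations to \cite{DS05}, \cite{DN11} and \cite{Tru15}, and your sketch is precisely the standard argument underlying those references — conjugation $g^{n}=\pi\circ f^{n}\circ\pi^{-1}$ plus the uniform submultiplicativity $\delta_{p}(h_{1}\circ h_{2})\leq c\,\delta_{p}(h_{1})\,\delta_{p}(h_{2})$, whose proof you correctly identify as the technical core and legitimately defer to the literature. So your proposal is correct and matches the route the paper implicitly relies on; no gap to report.
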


 \subsection{Cohomologically hyperbolicity}
  In \cite{MW22}, the next notation is introduced (cf.\ \cite{Gue10}).
  \begin{definition}[{\cite[Definition 1.1]{MW22}}]
   Let $X$ be a projective variety over an algebraic closed field of characteristic zero.
   Let $f:X\dashrightarrow X$ be a dominant rational self-map.
   For $1\leq p\leq \mathrm{dim}(X)$, we say $f$ is $p$-cohomologically hyperbolic if
   \begin{align*}
    \lambda_p(f)>\lambda_i(f)
   \end{align*}
   for all $i\in\{0,1,\ldots,\mathrm{dim}(X)\}\backslash\{p\}$.\par
  \end{definition}
  In \cite{MW22}, the next theorem is stated in relation to cohomologically hyperbolicity.
  \begin{theorem}[{\cite[Theorem 1.11]{MW22}}]\label{MW22}
   Let $X$ be a smooth projective variety defined over $\overline{\mathbb{Q}}$.
   Let $f:X\dashrightarrow X$ be a $1$-cohomologically hyperbolic dominant rational map.
   Then, there is $x\in X(\overline{\mathbb{Q}})_f$ such that the forward orbit $\mathcal{O}_f(x)$ is Zariski dense in $X$.
   Moreover, we can take such $x$ as that $\alpha_f(x)$ exists and $\alpha_f(x)=\lambda_1(f)$.
  \end{theorem}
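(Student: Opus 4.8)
The estimate $\overline{\alpha}_{f}(x)\le\lambda_{1}(f)$ holds for every $x\in X(\overline{\mathbb{Q}})_{f}$ with no hypothesis on $f$ (Matsuzawa; one resolves the indeterminacy of the iterates $f^{n}$ and compares intersection numbers on the graphs). So the theorem reduces to producing a single point $x\in X(\overline{\mathbb{Q}})_{f}$ which both has a Zariski-dense forward orbit and satisfies $\underline{\alpha}_{f}(x)\ge\lambda_{1}(f)$; then $\alpha_{f}(x)=\lambda_{1}(f)$ follows automatically. The plan is to get the lower bound on the arithmetic degree from a height-growth estimate driven by a $\lambda_{1}(f)$-eigenclass of $f^{*}$, and then to force Zariski density by passing to the orbit closure and invoking $1$-cohomological hyperbolicity.

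\textbf{Eigen-height and growth estimate.} Work on a suitable smooth projective birational model $p\colon Y\to X$. Because a genuinely $1$-stable model need not exist once $\dim X\ge 3$, one uses instead the canonical nef class $\theta$ obtained as a limit of normalised pullbacks $(f^{n})^{*}H/\lambda_{1}(f)^{n}$, so that $f^{*}\theta\equiv\lambda_{1}(f)\,\theta$; it is exactly $1$-cohomological hyperbolicity that makes $\lambda_{1}(f)$ the genuine growth rate of $\|(f^{n})^{*}\|$ with all lower-order corrections of size $o(\lambda_{1}(f)^{n})$, and hence makes this construction and the ensuing estimates work. Let $h_{\theta}$ be an associated logarithmic Weil height, normalised to be bounded below. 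If $y\in X(\overline{\mathbb{Q}})_{f}$, then each $f^{k}(y)$ avoids the indeterminacy locus, so on a resolution $(p,q)$ of $f$ the $p$-exceptional discrepancy between $q^{*}\theta$ and $p^{*}f^{*}\theta$ is always evaluated off its support and contributes only $O(1)$; this gives $h_{\theta}(f(y))=\lambda_{1}(f)\,h_{\theta}(y)+O(1)$ with the implied constant uniform, and iterating yields $h_{\theta}(f^{n}(y))\ge\lambda_{1}(f)^{n}\bigl(h_{\theta}(y)-C/(\lambda_{1}(f)-1)\bigr)$, where $\lambda_{1}(f)>\lambda_{0}(f)=1$ is used. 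By Amerik's theorem $X(\overline{\mathbb{Q}})_{f}$ is non-empty, and — using that an infinite orbit lying in a fixed number field has unbounded height, together with a model on which $\theta$ is big — it contains a point $x$ with $h_{\theta}(x)>C/(\lambda_{1}(f)-1)$. Since $\theta$ is numerically dominated by a multiple of an ample class, $h_{X}^{+}(f^{n}(x))$ then grows at least like $\lambda_{1}(f)^{n}$, so $\underline{\alpha}_{f}(x)\ge\lambda_{1}(f)$ and hence $\alpha_{f}(x)=\lambda_{1}(f)$.

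\textbf{Zariski density.} Put $V:=\overline{\mathcal{O}_{f}(x)}$ and suppose $V\subsetneq X$. Then $f$ restricts to a dominant rational self-map $f|_{V}\colon V\dashrightarrow V$, in fact a birational one when $f$ is, since a component of the full $f$-preimage of $V$ maps birationally onto $V$. As $V\hookrightarrow X$ is a closed immersion, heights on $V$ agree with $h_{X}$ up to $O(1)$, so lifting $x$ to a resolution $\widetilde{V}$ of $V$ gives $\alpha_{f|_{\widetilde{V}}}(\widetilde{x})=\lambda_{1}(f)$, whence $\lambda_{1}(f|_{\widetilde{V}})\ge\lambda_{1}(f)$ by the upper bound on $\widetilde{V}$. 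On the other hand $\lambda_{1}(f|_{V})\le\lambda_{1}(f)$, since $(f^{n})|_{V}$ is cut out by forms of degree $\deg(f^{n})$, and $\lambda_{1}$ is a birational invariant (Theorem \ref{birational conjugate}); thus $\lambda_{1}(f|_{\widetilde{V}})=\lambda_{1}(f)$. This has to be ruled out. At a minimum, $1$-cohomological hyperbolicity forbids an $f$-invariant fibration over a curve — via the relative dynamical-degree formula together with $\lambda_{\dim X}(f)=1$, such a fibration would force $\lambda_{2}(f)\ge\lambda_{1}(f)$ — hence forbids a non-constant $f$-invariant rational function, which is a necessary condition for dense orbits. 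The remaining possibility, a proper $f$-invariant subvariety carrying the full first dynamical degree, must be excluded by a sharper argument: $f|_{\widetilde{V}}$ is itself $1$-cohomologically hyperbolic, so one can set up a descent on the dimension, and one expects to reach a contradiction by showing that the eigenclass $\theta$ cannot degenerate onto a proper invariant subvariety. Granting this, $V=X$ and $x$ is the required point.

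\textbf{Main obstacle.} Both difficult points stem from working over $\overline{\mathbb{Q}}$, where no Baire-category argument is available: first, exhibiting a $\overline{\mathbb{Q}}$-point with a well-defined infinite orbit and large $h_{\theta}$-height (this is where Amerik's theorem and a careful choice of model enter, and where $1$-cohomological hyperbolicity is needed to keep all correction terms of order $o(\lambda_{1}(f)^{n})$); and second, excluding the concentration of the full first dynamical degree on a proper invariant subvariety, which is what forces the constructed point to have a Zariski-dense orbit. I expect this last point to be the heart of the matter.
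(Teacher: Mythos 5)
The paper does not actually prove this theorem: it is quoted as a black box, attributed directly to \cite[Theorem 1.11]{MW22}, and no internal argument is given. So there is no ``paper's own proof'' to compare against; I evaluate your attempt on its own terms.

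Your outline --- an $f^{*}$-eigenclass $\theta$ with $f^{*}\theta\equiv\lambda_{1}(f)\theta$, an associated canonical-type height $h_{\theta}$, Amerik's theorem to produce an algebraic point with well-defined orbit, a lower bound on $\underline{\alpha}_{f}$ from $h_{\theta}$-growth, and then Zariski density by a dimension/orbit-closure argument --- does assemble the right ingredients, but as written it has genuine gaps. (a) The estimate $h_{\theta}(f(y))=\lambda_{1}(f)\,h_{\theta}(y)+O(1)$ with a constant uniform over $y\in X(\overline{\mathbb{Q}})_{f}$ is false in general when $f$ is not $1$-stable: the discrepancy on a resolution is a height attached to the exceptional cycle, and being ``evaluated off its support'' does not make it $O(1)$ --- it can be unbounded as the orbit approaches the exceptional/indeterminacy locus. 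The step where $1$-cohomological hyperbolicity actually tames this error is precisely what is missing. (b) You need a $\overline{\mathbb{Q}}$-point whose orbit sees $\theta$ positively; you invoke ``a model on which $\theta$ is big,'' but a nef $f^{*}$-eigenclass need not become big on any birational model, and without bigness the Northcott-type argument that an infinite orbit has $h_{\theta}\to\infty$ does not follow. (c) The Zariski-density step is explicitly left open (``one expects to reach a contradiction''), and that is exactly where the hard content lies; forbidding an invariant fibration over a curve is far from forbidding a proper invariant subvariety that carries the full $\lambda_{1}$. In short, this is a reasonable sketch of the geography of the proof, not a proof.
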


 \subsection{Birational maps of $\mathbb{P}^N$}\label{birational maps on P^N}\label{Birational maps}
  In this subsection, we introduce some birational maps of $\mathbb{P}^N$ and the results of \cite{BDJK24}.\par
  First, we consider the case for which the ground field is $k$ with $\mathrm{char}(k)=0$.
  \begin{itemize}
   \item For $M\in\mathrm{GL}_{N+1}(k)$, define the linear transformation $L_M:\mathbb{P}_k^N\rightarrow\mathbb{P}_k^N$ as its coordinate transformation by $M$, and this is an automorphism.
   \item For $A=(a_{ij})_{i.j}\in\mathrm{GL}_{N}(\mathbb{Z})$, define the monomial map $h_A:\mathbb{P}_k^N\rightarrow\mathbb{P}_k^N$ as
         \begin{align*}
          \left(\frac{x_1}{x_0},\ldots,\frac{x_N}{x_0}\right)\mapsto\left(\left(\frac{x_1}{x_0}\right)^{a_{11}}\ldots\left(\frac{x_N}{x_0}\right)^{a_{1N}},\ldots,\left(\frac{x_1}{x_0}\right)^{a_{N1}}\ldots\left(\frac{x_N}{x_0}\right)^{a_{NN}}\right)
         \end{align*}
         by corresponding
         \begin{align*}
          \left(\frac{x_1}{x_0},\ldots,\frac{x_N}{x_0}\right)\quad\text{with}\quad[x_0:x_1:\cdots:x_N].
         \end{align*}
  \end{itemize}
  Now $(L_M)^{-1}=L_{M^{-1}}$ and $(h_A)^{-1}=h_{A^{-1}}$ hold.\par
  The next result about the calculation of the dynamical degree is proved in \cite{BDJK24}.
  In \cite{BDJK24}, this proposition is proved also for $\mathbb{P}^d$ ($d\geq3$), that is, the general case.
  \begin{proposition}[{cf.\ \cite[Section 3]{BDJK24}}]\label{equation for dynamical degree}
   Define the matrix
   \begin{align*}
    B:=\begin{pmatrix}
        1 & -1 & 1 & -1 \\ 1 & 1 & -1 & 1 \\ -1 & 1 & 1 & -1 \\ 1 & -1 & 1 & 1
       \end{pmatrix}.
   \end{align*}
   For a matrix $A\in\mathrm{GL}_{3}(\mathbb{Z})$, define
   \begin{align*}
    f_A:=L_{B^{-1}}\circ h_{-I}\circ L_{B}\circ h_A:\mathbb{P}^3\dashrightarrow\mathbb{P}^3.
   \end{align*}
   Define the finite sets of $3$-dimensional vectors as
   \begin{align*}
    \mathcal{U}:=\left\{\begin{pmatrix}
                         0 \\ 0 \\ 0 
                        \end{pmatrix},
			\begin{pmatrix}
                         -1 \\ 0 \\ 0 
                        \end{pmatrix},
			\begin{pmatrix}
                         0 \\ -1 \\ 0 
                        \end{pmatrix},
			\begin{pmatrix}
                         0 \\ 0 \\ -1 
                        \end{pmatrix}\right\}\\
    \mathcal{V}:=\left\{\begin{pmatrix}
                         1 \\ 1 \\ 0 
                        \end{pmatrix},
			\begin{pmatrix}
                         0 \\ 1 \\ 1 
                        \end{pmatrix},
			\begin{pmatrix}
                         -1 \\ -1 \\ 0 
                        \end{pmatrix},
			\begin{pmatrix}
                         0 \\ -1 \\ -1 
                        \end{pmatrix}\right\}\\
    \mathcal{P}:=\left\{\begin{pmatrix}
                         -1 \\ -1 \\ -1 
                        \end{pmatrix},
			\begin{pmatrix}
                         1 \\ 0 \\ 0 
                        \end{pmatrix},
			\begin{pmatrix}
                         0 \\ 1 \\ 0 
                        \end{pmatrix},
			\begin{pmatrix}
                         0 \\ 0 \\ 1 
                        \end{pmatrix}\right\}.
   \end{align*}
   Then, if any element of $A^n(\mathcal{V}\cup\mathcal{P})$ ($n\geq1$) is contained in the interior of a $3$-dimensional cone generated by elements of $\mathcal{P}$, the first dynamical degree $\lambda=\lambda_1(f_A)$ satisfies
   \begin{align*}
    \sum_{n=1}^{\infty} \left(\sum_{v\in\mathcal{V}}\max_{u\in\mathcal{U}}\langle u,A^nv\rangle\right)\cdot\frac{1}{\lambda^n}=1
   \end{align*}
   where $\langle\cdot,\cdot\rangle$ is the inner product.
  \end{proposition}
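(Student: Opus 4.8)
The plan is to compute $\lambda_1(f_A)=\lim_{n\to\infty}\deg(f_A^n)^{1/n}$ by a careful bookkeeping of the degree sequence $D_n:=\deg(f_A^n)$, exploiting the decomposition $f_A=g\circ h_A$ with $g:=L_{B^{-1}}\circ h_{-I}\circ L_B$. Since $(-I)^2=I$, the map $g$ is a fixed birational involution of $\mathbb{P}^3$ of degree $3$, namely a linear conjugate of the standard Cremona involution $h_{-I}$, which contracts the four coordinate hyperplanes and blows up the six coordinate lines. I would first translate the data $\mathcal{U},\mathcal{V},\mathcal{P}$ into toric language: $\mathcal{P}$ is the set of primitive ray generators $e_1,e_2,e_3,-(e_1+e_2+e_3)$ of the fan of $\mathbb{P}^3$, so the ``$3$-dimensional cones generated by elements of $\mathcal{P}$'' are exactly the four maximal cones and a vector lies in the interior of one iff it avoids every torus-invariant wall; $\mathcal{U}$ is the vertex set of the polytope $-\Delta$ attached to $\mathcal{O}_{\mathbb{P}^3}(1)$, so that $w\mapsto\max_{u\in\mathcal{U}}\langle u,w\rangle$ is minus the support function of $\mathcal{O}(1)$ and records the degree contributed by the torus-invariant divisor class associated with $w$; and $\mathcal{V}$ encodes, in the toric/tropical model, the directions of the new $g$-exceptional data produced each time one more copy of $g$ is inserted into an iterate.

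With this dictionary, I would track the class $(f_A^n)^{\ast}H$ on a suitable model: first a toric resolution on which $g$ and $h_A$ become locally morphisms, then further blow-ups along the $f_A$-orbit of the finitely many centers attached to $\mathcal{V}\cup\mathcal{P}$, so that successive pullbacks compose exactly. The hypothesis that $A^n(\mathcal{V}\cup\mathcal{P})$ lies in the interior of a maximal cone of the $\mathbb{P}^3$-fan for all $n\ge1$ is precisely what keeps, along the orbit, the indeterminacy loci and the $g$-exceptional divisors in general position --- no two of them ever meet, and no monomial map $h_{A^n}$ collapses one of them onto a torus-invariant stratum --- so that no unexpected cancellation occurs and the degree bookkeeping is exact rather than merely asymptotic. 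Granting this, one gets a renewal-type (convolution) recursion: with $d_k:=\sum_{v\in\mathcal{V}}\max_{u\in\mathcal{U}}\langle u,A^kv\rangle\ge0$ denoting the degree that a batch of $g$-exceptional data created $k$ steps in the past contributes now, one has $D_{n+1}=\sum_{k=1}^{n}d_k\,D_{n+1-k}+(\text{finitely many initial correction terms})$.

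From the recursion the stated identity follows by standard generating-function analysis. Because $D_n\asymp\lambda^n$ with $\lambda=\lambda_1(f_A)$, and because the hypotheses keep the exponential growth of the $d_k$ strictly below $\lambda$, the power series $\sum_{k\ge1}d_kt^{-k}$ converges at $t=\lambda$, is strictly decreasing in $t$, and so has a unique root in its domain of convergence; dividing the recursion by $\lambda^{n+1}$, summing over $n$, and letting $n\to\infty$ (the finitely many initial terms and the exchange of limit and sum are controlled by a two-sided bound $c^{-1}\lambda^n\le D_n\le c\,\lambda^n$) forces $\sum_{k\ge1}d_k\lambda^{-k}=1$, which is the claim.

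The main obstacle is the middle paragraph: showing that the genericity hypothesis really yields an \emph{exact} degree recursion. Concretely one must produce a projective model $\pi\colon Y\to\mathbb{P}^3$ --- a toric resolution together with blow-ups along the $f_A$-orbit of the centers encoded by $\mathcal{V}\cup\mathcal{P}$ --- on which $f_A$ is algebraically stable, so that $(f_A^n)^{\ast}=(f_A^{\ast})^n$ on $N^1(Y)$ (note that $Y$ must carry an infinite, though locally finite, family of exceptional divisors, since the transcendence of $\lambda_1(f_A)$ rules out any finite algebraically stable model), then identify the operator $f_A^{\ast}$ explicitly in the resulting basis of divisor classes and read off both $D_n$ and the correction terms. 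The input ``interior of a maximal cone'' is exactly what makes the exceptional divisors disjoint along the orbit and the $h_{A^n}$-pullbacks of support functions piecewise-linear of the claimed shape; verifying this disjointness together with the accompanying intersection numbers is the technical heart. One should also separately check that the growth rate of $D_n$ genuinely dominates that of the $d_k$, so that the series converges at $t=\lambda$ --- under the stated hypotheses this holds, but it is the point that must be justified with care.
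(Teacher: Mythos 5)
The paper does not prove this proposition; it states it, with the attribution ``cf.\ \cite[Section 3]{BDJK24}'', as a specialization to $\mathbb{P}^3$ of the degree-growth analysis carried out in that reference, and then invokes it in Sections 3 and 4. So there is no internal proof to compare your attempt against. Your sketch is a reasonable reconstruction of the strategy in \cite{BDJK24}. The renewal-equation skeleton is right: the hypothesis that every $A^n(\mathcal{V}\cup\mathcal{P})$ lies in the interior of a maximal cone of the $\mathbb{P}^3$-fan is exactly what keeps the exceptional and indeterminacy data of the iterates out of each other's way, so that $D_n:=\deg(f_A^n)$ satisfies an exact convolution recursion with kernel $d_k=\sum_{v\in\mathcal{V}}\max_{u\in\mathcal{U}}\langle u,A^kv\rangle$, and dividing by $\lambda^n$ and summing --- once one knows $D_n\asymp\lambda^n$ with $\lambda>\limsup_k d_k^{1/k}$ --- yields $\sum_{k\ge1}d_k\lambda^{-k}=1$.

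The part of your plan that should go is the production of a single projective model $\pi\colon Y\to\mathbb{P}^3$ on which $f_A$ is algebraically stable. You flag the difficulty yourself, but it is fatal, not technical: a smooth projective variety has Néron--Severi group of finite rank, so there is no morphism $\pi\colon Y\to\mathbb{P}^3$ with infinitely many $\pi$-exceptional prime divisors, and independently, if $f_A$ were stable on a finite-rank model then $f_A^{\ast}$ would be a linear endomorphism of $N^1(Y)_{\mathbb{R}}$ and $\lambda_1(f_A)$ would be an algebraic integer --- precisely what the construction is designed to avoid. The argument in \cite{BDJK24} therefore never stabilizes: one works with an increasing, nonterminating tower of blow-ups of $\mathbb{P}^3$ (equivalently with Weil classes or b-divisors on the Riemann--Zariski space), and the tameness hypothesis is used to show directly, via the support functions and Newton polytopes of the divisors pulled back under $h_{A^n}$, that the factor $g$ inserted $k$ steps in the past contributes exactly $d_k$ to the degree of $f_A^n$, with no collisions and hence no correction terms beyond the finite initial segment. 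Replace your ``algebraically stable $Y$'' lemma with this inductive bookkeeping and the sketch matches the intended proof.
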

  \begin{remark}
   The function
   \begin{align*}
   \begin{array}{cccc}
    \phi: & \mathbb{R}^3 & \rightarrow & \mathbb{R}\\
    & v & \mapsto & \max_{u\in\mathcal{U}}\langle u,v\rangle
   \end{array}
   \end{align*}
   is non-negative and is linear in each $3$-dimensional cone generated by elements of $\mathcal{P}$.
   By writing
   \begin{align*}
    \Psi_{\mathcal{U},\mathcal{V}}(A)=\sum_{v\in\mathcal{V}}\max_{u\in\mathcal{U}}\langle u,Av\rangle,
   \end{align*}
   there is a unique $\lambda>0$ which satisfies the equation
   \begin{align*}
    \sum_{n=1}^{\infty} \Psi_{\mathcal{U},\mathcal{V}}(A^n)\cdot\frac{1}{\lambda^n}=1
   \end{align*}
   and this is the dynamical degree.
  \end{remark}
  In \cite{BDJK24}, birational maps with transcendental dynamical degree are constructed.
  Some of the notations are needed for writing down the condition for transcendency.\par
  For the matrix $A$ in Proposition \ref{equation for dynamical degree}, let $P(x)$ be the characteristic polynomial of $A$, and let $\xi$ be one of its roots that has a maximal absolute value.
  Assume $P(x)$ is irreducible and denote the splitting field of $P(x)$ over $\mathbb{Q}$ by $K$.
  Let $H_{\xi}$ be the subspace of $\mathbb{C}^3$, which is generated by the eigenvector associated with $\xi$.
  Define the projection
  \begin{align*}
    \pi:\mathbb{C}^3\rightarrow\mathrm{H}_{\xi}
   \end{align*}
  and the set
  \begin{align*}
    \mathcal{W}:=(\mathcal{U}-\mathcal{U})\backslash\{0\}=\left\{\pm\begin{pmatrix}
                                                           1 \\ 0 \\ 0 
                                                          \end{pmatrix},
		                                  	  \pm\begin{pmatrix}
                                                           0 \\ 1 \\ 0 
                                                          \end{pmatrix},
		                                       	  \pm\begin{pmatrix}
                                                           0 \\ 0 \\ 1 
                                                          \end{pmatrix},
		                                  	  \pm\begin{pmatrix}
                                                           1 \\ -1 \\ 0 
                                                          \end{pmatrix},
		                                  	  \pm\begin{pmatrix}
                                                           0 \\ 1 \\ -1 
                                                          \end{pmatrix},
		                                       	  \pm\begin{pmatrix}
                                                           -1 \\ 0 \\ 1 
                                                          \end{pmatrix}\right\}.\\
  \end{align*}
  For each $v\in\mathcal{V},w\in\mathcal{W}$, define 
  \begin{align*}
   \sigma(v,w):=-\frac{\overline{\langle w,\pi(v)\rangle}}{\langle w,\pi(v)\rangle}\in K.
  \end{align*}
  The sufficient condition for the transcendency of $\lambda>0$ which satisfies the equation 
  \begin{align*}
    \sum_{n=1}^{\infty} \Psi_{\mathcal{U},\mathcal{V}}(A^n)\cdot\frac{1}{\lambda^n}=1
   \end{align*}
  is stated as below (cf.\ \cite[Theorem 1.3, Remark 5.11]{BDJK24}).
  \begin{enumerate}
  \renewcommand{\labelenumi}{\rm{(\roman{enumi})}}
  \renewcommand{\theenumi}{\roman{enumi}}
   \item $P(x)$ is irreducible
   \item the roots of $P(x)$ are $\xi_1=\xi,\xi_2=\overline{\xi},\xi_3$ with $\abs{\xi}>1$ and the angle of $\xi_1$ is not a rational multiple of $\pi$
   \item $\sigma(v,w)\notin U_K$
   \item $\frac{\sigma(v,w)}{\sigma(v',w')}\notin U_K$ for $v,v'\in\mathcal{V}$, $w,w'\in\mathcal{W}$ unless both pairs of vectors are linearly dependent (over $\mathbb{R}$)
  \end{enumerate}
  Now $U_K$ is the set of the invertible elements of $\mathcal{O}_K$. 
  In \cite{BDJK24}, the matrix $A$, which satisfies the above condition is constructed, and the associated birational map has a transcendental dynamical degree.
  \begin{theorem}[{cf.\ \cite[Section 7.2]{BDJK24}}]\label{transcendental dynamical degree}
   Define the matrix
    \begin{align*}
     A:=\begin{pmatrix}
         -3 & -14 & -12 \\ 4 & 11 & 6 \\ -2 & -4 & -1
        \end{pmatrix}.
    \end{align*}
    and $B$ as in Proposition \ref{equation for dynamical degree}.
    Then, in terms of
    \begin{align*}
     f_A:=L_{B^{-1}}\circ h_{-I}\circ L_{B}\circ h_A:\mathbb{P}^3\dashrightarrow\mathbb{P}^3,
    \end{align*}
    the dynamical degree $\lambda_1(f_A)$ is transcendental.
  \end{theorem}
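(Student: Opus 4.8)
The plan is to verify that this particular $A$ satisfies the four sufficient conditions (i)--(iv) listed above; by \cite[Theorem 1.3 and Remark 5.11]{BDJK24} this makes the unique $\lambda>0$ solving $\sum_{n\geq 1}\Psi_{\mathcal{U},\mathcal{V}}(A^n)\lambda^{-n}=1$ transcendental, and one then checks that the hypothesis of Proposition \ref{equation for dynamical degree} holds for this $A$, so that this $\lambda$ is exactly $\lambda_1(f_A)$. Everything reduces to finite, partly computer-assisted, checks on $A$ and on the splitting field of its characteristic polynomial.

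First I would record $\det A=1$ (so $A\in\mathrm{GL}_3(\mathbb{Z})$) and the characteristic polynomial $P(x)=x^3-7x^2+15x-1$, whose product of roots is $1$. For (i): the only possible rational roots are $\pm1$, and $P(1)=8$, $P(-1)=-24$, so $P$ is irreducible. For (ii): $\operatorname{disc}(P)=-1984<0$, so $P$ has one real root $\xi_3$ and a complex-conjugate pair $\xi,\overline{\xi}$; from $P(0)=-1<0<P(1)$ one gets $0<\xi_3<1$, hence $|\xi|^2=\xi\overline{\xi}=\xi_3^{-1}>1$, and $|\xi|>1>\xi_3$ shows that $\xi$ has strictly largest modulus. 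For the angle, I would use
\begin{align*}
 2\cos(2\arg\xi)=\frac{\xi}{\overline{\xi}}+\frac{\overline{\xi}}{\xi}=(7-\xi_3)^2\,\xi_3-2\in\mathbb{Q}(\xi_3).
\end{align*}
This number is irrational (otherwise $\xi_3$ would be a root of a rational quadratic), so it generates the cubic field $\mathbb{Q}(\xi_3)$, which has signature $(1,1)$ and is therefore not totally real; since $\mathbb{Q}(2\cos(2\pi r))$ is totally real for every $r\in\mathbb{Q}$, the angle of $\xi$ cannot be a rational multiple of $\pi$. (Alternatively, compute the minimal polynomial $x^3-102x^2+3196x-4008$ of $2\cos(2\arg\xi)$ and check that it is none of the four cubics $x^3+x^2-2x-1$, $x^3-3x+1$, $x^3-x^2-2x+1$, $x^3-3x-1$ that arise as minimal polynomials of $2\cos(2\pi/m)$ for the $m$ with $\varphi(m)=6$.)

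The substantive step is (iii)--(iv). The splitting field $K=\mathbb{Q}(\xi,\overline{\xi})$ has degree $6$ with $\operatorname{Gal}(K/\mathbb{Q})\cong S_3$ (as $\operatorname{disc}(P)$ is not a square) and signature $(0,3)$, because no embedding of $K$ can carry the three roots of $P$ all into $\mathbb{R}$ when only one of them is real; hence $U_K$ has rank $2$. After fixing a $\xi$-eigenvector $v_\xi\in\mathbb{Q}(\xi)^3$ together with the matching left eigenvector, each inner product $\langle w,\pi(v)\rangle$ equals $Q_{v,w}(\xi)$ for an explicit $Q_{v,w}\in\mathbb{Q}[x]$, so $\sigma(v,w)=-Q_{v,w}(\overline{\xi})/Q_{v,w}(\xi)\in K$. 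Since complex conjugation is the transposition in $\operatorname{Gal}(K/\mathbb{Q})$ swapping $\xi$ and $\overline{\xi}$, one has $N_{K/\mathbb{Q}}(\sigma(v,w))=1$ identically, so a norm test never decides anything; instead I would factor the fractional ideal $(\sigma(v,w))=(Q_{v,w}(\overline{\xi}))(Q_{v,w}(\xi))^{-1}$ in $\mathcal{O}_K$ and verify it is not the unit ideal (equivalently, that the prime factorization of $(Q_{v,w}(\xi))$ is not stable under complex conjugation). Running this over all $v\in\mathcal{V}$, $w\in\mathcal{W}$ gives (iii), and the analogous ideal computation for the finitely many ratios $\sigma(v,w)/\sigma(v',w')$, excluding those in which $\{v,w\}$ and $\{v',w'\}$ are both $\mathbb{R}$-linearly dependent, gives (iv). These finite computations --- the unit group of $K$ and the prime factorizations in $\mathcal{O}_K$ --- are the part I would do with SageMath.

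The main obstacles I anticipate are two. The first is the exactness of the number-theoretic bookkeeping in the sextic field $K$ for (iii)--(iv): the checks are finite but numerous --- several dozen numbers $\sigma(v,w)$ and their pairwise ratios --- and, since $N_{K/\mathbb{Q}}(\sigma(v,w))=1$ forces one to argue at the level of ideals rather than norms, they must be run with certified arithmetic rather than floating point. The second is the hypothesis of Proposition \ref{equation for dynamical degree}, namely that no element of $A^n(\mathcal{V}\cup\mathcal{P})$ ever meets one of the six walls of the fan of $\mathbb{P}^3$ for any $n\geq1$: for bounded $n$ this is a direct check, and for large $n$ one argues that the direction of $A^n v$ is a rotation-and-expansion controlled by $\xi$, whose argument is an irrational multiple of $\pi$ by (ii), so the asymptotic orbit direction avoids the (finitely many, codimension-one) walls. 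Conditions (i) and (ii) are elementary once $P$ is at hand. Since this $A$ is precisely the example of \cite[Section 7.2]{BDJK24}, all of this is in the literature and the task is to assemble the verification.
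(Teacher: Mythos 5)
The paper does not in fact prove this theorem: Theorem~\ref{transcendental dynamical degree} is taken over from \cite[Section 7.2]{BDJK24}, so the natural comparison is with that source and with the paper's own Section~\ref{1-cohomologically hyperbolic birational map}, where exactly this recipe is carried out for the matrix $A_1$. Your outline of the recipe is correct: $\det A=1$, $P(x)=x^3-7x^2+15x-1$ with discriminant $-1984$, and the verification of (i)--(iv) plus the cone hypothesis of Proposition~\ref{equation for dynamical degree}. Your argument for the angle --- $2\cos(2\arg\xi)=(7-\xi_3)^2\xi_3-2$ generates the non--totally-real cubic field $\mathbb{Q}(\xi_3)$, whereas $\mathbb{Q}(2\cos(2\pi r))$ is always totally real --- is a valid and rather elegant alternative to the argument the paper uses for $A_1$ (that the Galois conjugate $\xi_1/\xi_3$ of $\xi_1/\xi_2$ has modulus $\neq 1$). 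Your remark that $N_{K/\mathbb{Q}}(\sigma(v,w))=1$, so that (iii)--(iv) must be settled at the level of ideals rather than norms, is also a useful sharpening of the paper's raw Sage test \texttt{t not in UK}.

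The one genuine gap is in your treatment of the cone condition. You write that for large $n$ ``the direction of $A^nv$ is a rotation-and-expansion controlled by $\xi$, whose argument is an irrational multiple of $\pi$ by (ii), so the asymptotic orbit direction avoids the (finitely many, codimension-one) walls.'' This is backwards: irrationality of $\arg\xi/\pi$ gives \emph{equidistribution} of the rotation, so the projection of $A^n v$ onto the dominant $\xi$-eigenplane approaches every wall direction infinitely often --- the worst case, not the favourable one. What actually closes the argument, in \cite[Lemma 7.4]{BDJK24} and in the paper's parallel Lemma~\ref{zero terms of linear recurrence} for $A_1$, is that each wall coordinate of $A^N v$ is an \emph{integer} linear recurrence $a_N=c_1\xi_1^N+c_2\xi_2^N+c_3\xi_3^N$; if $a_N=0$ then
\begin{align*}
\abs[\Big]{\left(-\tfrac{c_1}{c_2}\right)\left(\tfrac{\xi_1}{\xi_2}\right)^N-1}=\abs[\Big]{\tfrac{c_3}{c_2}}\,\abs{\xi_1}^{-3N},
\end{align*}
and the Baker--W\"ustholz theorem gives an explicit lower bound $\exp(-C\log N)$ for the left-hand side, producing a concrete $N_0$ beyond which $a_N\neq 0$; the finitely many $N\le N_0$ are then ruled out by reduction modulo well-chosen primes. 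Without the Baker--W\"ustholz step your ``for bounded $n$ this is a direct check'' has no bound to check up to, so as written the proposal does not establish the hypothesis of Proposition~\ref{equation for dynamical degree}.
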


\section{A $2$-cohomologically hyperbolic birational map}\label{2-cohomologically hyperbolic birational map}
 In this section, we calculate the dynamical degrees of the map $f_A$ of the form in Proposition \ref{equation for dynamical degree}.\par
 For the birational map $f_A$ in Theorem \ref{transcendental dynamical degree}, $\lambda_0(f_A)=\lambda_3(f_A)=1$ and  $\lambda_1(f_A)$ is transcendental.\par
 We denote
  \begin{align*}
   A=\begin{pmatrix}
      -3 & -14 & -12 \\ 4 & 11 & 6 \\ -2 & -4 & -1
     \end{pmatrix}\text{ and }
   B=\begin{pmatrix}
      1 & -1 & 1 & -1 \\ 1 & 1 & -1 & 1 \\ -1 & 1 & 1 & -1 \\ 1 & -1 & 1 & 1
     \end{pmatrix}.
  \end{align*}
  First, we consider the size of $\lambda_1(f)$.
  $\lambda=\lambda_1(f_A)$ satisfies the equation
  \begin{align*}
   \sum_{n=1}^{\infty} \Psi_{\mathcal{U},\mathcal{V}}(A^n)\cdot\frac{1}{\lambda^n}=1
  \end{align*}
  and each $\Psi_{\mathcal{U},\mathcal{V}}(A^n)$ is non-negative.
  Thus, $\lambda\geq\Psi_{\mathcal{U},\mathcal{V}}(A)$ and by
  \footnotesize
  \begin{align*}
   \Psi_{\mathcal{U},\mathcal{V}}(A)&=\sum_{v\in\mathcal{V}}\max_{u\in\mathcal{U}}\langle u,Av\rangle\\
                                    &=\max_{u\in\mathcal{U}}\left\langle u,\begin{pmatrix}
                                                                       -17 \\ 15 \\ -6 
                                                                      \end{pmatrix}\right\rangle
                                     +\max_{u\in\mathcal{U}}\left\langle u,\begin{pmatrix}
                                                                       -26 \\ 17 \\ -5 
                                                                      \end{pmatrix}\right\rangle
                                     +\max_{u\in\mathcal{U}}\left\langle u,\begin{pmatrix}
                                                                       17 \\ -15 \\ 6 
                                                                      \end{pmatrix}\right\rangle
                                     +\max_{u\in\mathcal{U}}\left\langle u,\begin{pmatrix}
                                                                       26 \\ -17 \\ 5 
                                                                      \end{pmatrix}\right\rangle\\
                                    &=17+26+15+17=75,
  \end{align*}
  \normalsize
  it derives $\lambda_1(f_A)=\lambda\geq75$.
  Also, by $f_A=L_{B^{-1}}\circ h_{-I}\circ L_{B}\circ h_A$,
  \begin{align*}
   \lambda_1(f_A)\leq\mathrm{deg}(f_A)\leq\mathrm{deg}(L_{B^{-1}})\cdot\mathrm{deg}(h_{-I})\cdot\mathrm{deg}(L_{B})\cdot\mathrm{deg}(h_A).
  \end{align*}
  Now $h_A:\mathbb{P}^3\dashrightarrow\mathbb{P}^3$ can be represented as
  \begin{align*}
   [x_0:x_1:x_2:x_3]\mapsto[x_0^{21}x_1^{3}x_2^{14}x_3^{12}:x_0^{50}:x_1^{7}x_2^{25}x_3^{18}:x_0^{28}x_1x_2^{10}x_3^{11}]
  \end{align*}
  and so $\mathrm{deg}(h_A)=50$.
  Also, $h_{-I}$ is represented as
  \begin{align*}
   [x_0:x_1:x_2:x_3]\mapsto[x_1x_2x_3:x_0x_2x_3:x_0x_1x_3:x_0x_1x_2]
  \end{align*}
  and so $\mathrm{deg}(h_{-I})=3$.
  Thus,
  \begin{align*}
   \lambda_1(f_A)\leq1\cdot3\cdot1\cdot50=150
  \end{align*}
  and in summary, $75\leq\lambda_1(f_A)\leq150$.\par
  Next, we calculate the second dynamical degree $\lambda_2(f_A)$.
  As in Section \ref{Dynamical degrees}, $\lambda_2(f_A)=\lambda_1({f_A}^{-1})$ and ${f_A}^{-1}$ can be represented as 
  \begin{align*}
   {f_A}^{-1}=h_{A^{-1}}\circ L_{B^{-1}}\circ h_{-I}\circ L_{B} 
  \end{align*}
  and this is birational conjugate to
  \begin{align*}
   f_{A^{-1}}=L_{B^{-1}}\circ h_{-I}\circ L_{B}\circ h_{A^{-1}}
  \end{align*}
  by the next diagram.
  \begin{figure}[h]
   \includegraphics[scale=0.4]{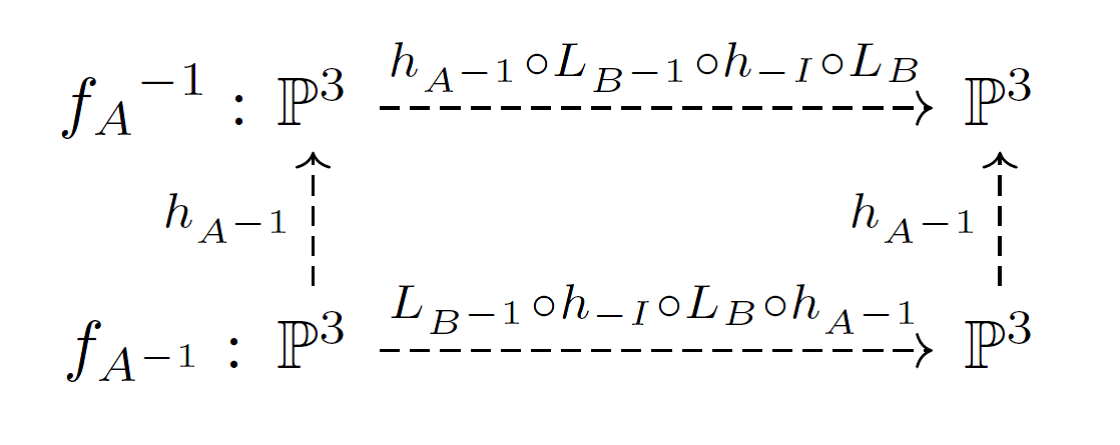}
   \label{figure}
  \end{figure}\newpage
  Thus, by Theorem \ref{birational conjugate}, $\lambda_2(f_A)=\lambda_1({f_{A^{-1}}})$ and the next lemma holds.
  \begin{lemma}\label{condition for the recurrence}
   For all $n\in\mathbb{Z}_{>0}$, any element of $A^{-n}(\mathcal{V}\cup\mathcal{P})$ is contained in the interior of a $3$-dimensional cone generated by elements of $\mathcal{P}$.
  \end{lemma}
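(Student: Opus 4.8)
The plan is to restate the conclusion as an assertion about one open convex cone, reduce it to five vectors by linearity, and then produce an explicit $A^{-1}$-invariant rational cone that witnesses it. Write a vector of $\mathbb R^3$ as $v=(v_1,v_2,v_3)$ and put $v_0=0$. The four elements of $\mathcal P$ sum to $0$, so they are the rays of a complete simplicial fan whose maximal cones are the four cones spanned by triples; writing $v=v_1(1,0,0)+v_2(0,1,0)+v_3(0,0,1)$ and using the relation $\sum_{p\in\mathcal P}p=0$ to normalise the coefficients, one checks that a nonzero $v$ lies in the interior of the cone spanned by $\mathcal P$ with its $j$-th element removed (with $(-1,-1,-1)$ labelled the $0$-th) precisely when $v_j<v_i$ for all $i\neq j$. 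So the lemma amounts to: for every $n\ge1$ and every $v\in\mathcal V\cup\mathcal P$, the minimum of $0,(A^{-n}v)_1,(A^{-n}v)_2,(A^{-n}v)_3$ is attained exactly once. Using $A^{-n}(-w)=-A^{-n}w$, additivity of $A^{-n}$, and $(-1,-1,-1)=-\bigl((1,0,0)+(0,1,0)+(0,0,1)\bigr)$, it is then enough to show that for all $n\ge1$ the five vectors $A^{-n}(1,0,0)$, $A^{-n}(0,1,0)$, $A^{-n}(0,0,1)$, $A^{-n}(1,1,0)$, $A^{-n}(0,1,1)$ lie in the open convex cone $R_0:=\{u\in\mathbb R^3:u_1>u_3>0>u_2\}$: indeed $R_0$ is closed under addition, so $A^{-n}\bigl((1,0,0)+(0,1,0)+(0,0,1)\bigr)\in R_0$ too; every element of $\mathcal V\cup\mathcal P$ is one of these five vectors, the negative of one, or the negative of that sum; and on $R_0$ the coordinate $u_2$ is the strict minimum of $0,u_1,u_2,u_3$, while on $-R_0$ the coordinate $u_1$ is.

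For this reduced claim I would use the spectral structure of $A^{-1}$. Since $\det A=1$, $A^{-1}\in\mathrm{SL}_3(\mathbb Z)$ and its characteristic polynomial is $x^3-15x^2+7x-1$ (the reciprocal of that of $A$); it is irreducible over $\mathbb Q$, has one real root $\mu\in(14,15)$, and---the product of the roots being $1$ and the non-real roots being conjugate---a complex pair of modulus $\mu^{-1/2}<1$. Fix a real eigenvector $\mathbf e$ for $\mu$ scaled so that $\mathbf e\in R_0$ (numerically $\mathbf e$ is proportional to $(479,-301,228)$). The plane $W$ spanned by the two complex eigenvectors is $A^{-1}$-invariant but not defined over $\mathbb Q$ (else $x^3-15x^2+7x-1$ would have a rational quadratic factor), so $W\cap\mathbb Q^3=\{0\}$; hence each of the five rational vectors $v$ above equals $c_v\mathbf e+w_v$ with $c_v\neq0$, $w_v\in W$, and $\mu^{-n}A^{-n}v=c_v\mathbf e+O(\mu^{-3n/2})$ as $n\to\infty$, where $c_v>0$ by a short computation using a positive left eigenvector for $\mu$. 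One also computes $A^{-1}(1,0,0)=(13,-8,6)$, $A^{-1}(0,1,0)=(34,-21,16)$, $A^{-1}(0,0,1)=(48,-30,23)$, $A^{-1}(1,1,0)=(47,-29,22)$, $A^{-1}(0,1,1)=(82,-51,39)$, all in $R_0$. I would then build a finitely generated rational cone $\mathcal D_0$ with all generators in $R_0$, containing these five images, and with $A^{-1}\mathcal D_0\subseteq\mathcal D_0$: start from the cone generated by the five images and finitely many of their subsequent $A^{-1}$-iterates, and enlarge by $\mathcal D\mapsto\operatorname{conv}(\mathcal D\cup A^{-1}\mathcal D)$; because the projective contraction toward $[\mathbf e]$ has rate $\mu^{-3/2}<1$ (the accompanying rotation being through an angle that is an irrational multiple of $\pi$, a property already established for $A$, so one full turn takes boundedly many steps), every sufficiently late iterate lands inside the convex hull of the earlier ones and the process terminates. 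Since $R_0$ is cut out by homogeneous linear inequalities, $\mathcal D_0\setminus\{0\}\subseteq R_0$ holds once every generator of $\mathcal D_0$ lies in $R_0$, and $A^{-1}\mathcal D_0\subseteq\mathcal D_0$ reduces to finitely many membership checks; both are verified with SageMath. Then $A^{-n}v=A^{-(n-1)}(A^{-1}v)\in\mathcal D_0\subseteq R_0$ for all $n\ge1$ and all five $v$, which is what was needed.

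I expect the main obstacle to be the construction and certification of $\mathcal D_0$: a priori the inward spiral coming from the complex eigenvalues could stop the enlargement process from closing up into a polyhedral cone, and ruling this out rests on the spectral features used elsewhere in the paper---irreducibility of the characteristic polynomial (so the projective orbit genuinely converges to the real eigendirection rather than being trapped spiralling in a rational invariant plane), $\det A=1$, and the strict domination $\mu>\mu^{-1/2}$. If one prefers to avoid an invariant cone, the alternative is to diagonalise $A^{-1}$ over $\mathbb Q(\mu)$, make the error term $O(\mu^{-3n/2})$ above effective, extract an explicit $N$ past which $A^{-n}v$ must lie in the maximal $\mathcal P$-cone through $\mathbf e$ (through $-\mathbf e$ for the reflected vectors), and settle the finitely many remaining $n<N$ by direct computation; this involves heavier arithmetic but the same structure.
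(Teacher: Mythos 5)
Your reformulation is correct and arguably cleaner than what the paper does: the characterization of the four maximal cones by ``which of $0,v_1,v_2,v_3$ is the unique minimum'' is right, and the reduction by linearity to the five vectors $e_1,e_2,e_3,e_1+e_2,e_2+e_3$ landing in the single open cone $R_0=\{u_1>u_3>0>u_2\}$ (rather than the paper's six cases, which only use negation) is a nice simplification. Your primary strategy from there, however, differs from the paper's. You propose an $A^{-1}$-invariant rational polyhedral cone $\mathcal D_0\subseteq R_0$ containing the five first images and then invoke invariance to conclude for all $n\geq1$. The paper instead diagonalises $A^{-1}$, writes $A^{-n}v=\sigma_1^n\,(\text{dominant part})+\sigma_2^n(\cdots)+\sigma_3^n(\cdots)$, and bounds the minimum of the six quantities $|X_n|,\dots,|Z_n-X_n|$ below by $|\sigma_1|^n c_{\min} - 2|\sigma_2|^n c_{\max}$; since the eigenvalue gap is huge ($|\sigma_1|\approx14.5$, $|\sigma_2|\approx0.26$), this is already $\approx 19.6>0$ at $n=1$ and increases, so it works uniformly for \emph{all} $n\geq1$ with no small-$n$ exceptions and no cone construction at all. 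Your invariant-cone route would, if carried out, give a discrete certificate checkable by finitely many rational inequalities, which is conceptually appealing; but the termination of your enlargement procedure $\mathcal D\mapsto\operatorname{conv}(\mathcal D\cup A^{-1}\mathcal D)$ is only argued heuristically (inward spiral, irrational rotation), and that is precisely the step you flag as the main obstacle. Your secondary suggestion---make the $O(\mu^{-3n/2})$ error effective, extract an $N$, check $n<N$ by hand---is in spirit what the paper does, except the paper's estimate is sharp enough that $N=1$ works and the finite check disappears. In short: same eigenstructure inputs, but the paper spends them on a one-line quantitative bound valid for all $n\geq1$, whereas you route them through an invariant cone whose construction you leave unfinished.
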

  \begin{proof}
   Now
   \begin{align*}
    A^{-1}=\begin{pmatrix}
            13 & 34 & 48 \\ -8 & -21 & -30 \\ 6 & 16 & 23
           \end{pmatrix}.
   \end{align*}
   By computation, $A^{-1}$ has three eigenvalues
   \begin{align*}
    \sigma_1=14.5227\cdots,\ \sigma_2=0.2386\cdots+0.1091\cdots i,\ \sigma_3=0.2386\cdots-0.1091\cdots i.
   \end{align*}
   For these eigenvalues, the elements of $\mathcal{V}$ and $\mathcal{P}$ are decomposed into the eigenspaces as below:
   \footnotesize
   \begin{align*}
    &v_1=\begin{pmatrix}
     1 \\ 1 \\ 0 
    \end{pmatrix}=\begin{pmatrix}
                   3.2278\cdots \\ -2.0244\cdots \\ 1.5364\cdots 
                  \end{pmatrix}+
		  \begin{pmatrix}
                   -1.1139\cdots-2.9971\cdots i \\ 1.5122\cdots+1.4692\cdots i \\ -0.7682\cdots-0.2464\cdots i
                  \end{pmatrix}+
		  \begin{pmatrix}
                   -1.1139\cdots+2.9971\cdots i \\ 1.5122\cdots-1.4692\cdots i \\ -0.7682\cdots+0.2464\cdots i
                  \end{pmatrix}\\
    &v_2=\begin{pmatrix}
     0 \\ 1 \\ 1 
    \end{pmatrix}=\begin{pmatrix}
                   5.7087\cdots \\ -3.5804\cdots \\ 2.7172\cdots 
                  \end{pmatrix}+
		  \begin{pmatrix}
                   -2.8543\cdots-2.0846\cdots i \\ 2.2902\cdots+0.4329\cdots i \\ -0.8586\cdots+0.2410\cdots i
                  \end{pmatrix}+
		  \begin{pmatrix}
                   -2.8543\cdots+2.0846\cdots i \\ 2.2902\cdots-0.4329\cdots i \\ -0.8586\cdots-0.2410\cdots i
                  \end{pmatrix}\\
    &-v_1=\begin{pmatrix}
     -1 \\ -1 \\ 0 
    \end{pmatrix}=\begin{pmatrix}
                   -3.2278\cdots \\ 2.0244\cdots \\ -1.5364\cdots 
                  \end{pmatrix}+
		  \begin{pmatrix}
                   1.1139\cdots+2.9971\cdots i \\ -1.5122\cdots-1.4692\cdots i \\ 0.7682\cdots+0.2464\cdots i
                  \end{pmatrix}+
		  \begin{pmatrix}
                   1.1139\cdots-2.9971\cdots i \\ -1.5122\cdots+1.4692\cdots i \\ 0.7682\cdots-0.2464\cdots i
                  \end{pmatrix}\\
    &-v_2=\begin{pmatrix}
     0 \\ -1 \\ -1 
    \end{pmatrix}=\begin{pmatrix}
                   -5.7087\cdots \\ 3.5804\cdots \\ -2.7172\cdots 
                  \end{pmatrix}+
		  \begin{pmatrix}
                   2.8543\cdots+2.0846\cdots i \\ -2.2902\cdots-0.4329\cdots i \\ 0.8586\cdots-0.2410\cdots i
                  \end{pmatrix}+
		  \begin{pmatrix}
                   2.8543\cdots-2.0846\cdots i \\ -2.2902\cdots+0.4329\cdots i \\ 0.8586\cdots+0.2410\cdots i
                  \end{pmatrix}\\
    &v'_1=\begin{pmatrix}
     -1 \\ -1 \\ -1 
    \end{pmatrix}=\begin{pmatrix}
                   -6.5853\cdots \\ 4.1302\cdots \\ -3.1345\cdots 
                  \end{pmatrix}+
		  \begin{pmatrix}
                   2.7926\cdots+3.1849\cdots i \\ -2.5651\cdots-1.1056\cdots i \\ 1.0672\cdots-0.0572\cdots i
                  \end{pmatrix}+
		  \begin{pmatrix}
                   2.7926\cdots-3.1849\cdots i \\ -2.5651\cdots+1.1056\cdots i \\ 1.0672\cdots+0.0572\cdots i
                  \end{pmatrix}\\
    &v'_2=\begin{pmatrix}
     1 \\ 0 \\ 0 
    \end{pmatrix}=\begin{pmatrix}
                   0.8765\cdots \\ -0.5497\cdots \\ 0.4172\cdots 
                  \end{pmatrix}+
		  \begin{pmatrix}
                   0.0617\cdots-1.1002\cdots i \\ 0.2748\cdots+0.6726\cdots i \\ -0.2086\cdots-0.1838\cdots i
                  \end{pmatrix}+
		  \begin{pmatrix}
                   0.0617\cdots+1.1002\cdots i \\ 0.2748\cdots-0.6726\cdots i \\ -0.2086\cdots+0.1838\cdots i
                  \end{pmatrix}\\
    &v'_3=\begin{pmatrix}
     0 \\ 1 \\ 0 
    \end{pmatrix}=\begin{pmatrix}
                   2.3512\cdots \\ -1.4746\cdots \\ 1.1191\cdots 
                  \end{pmatrix}+
		  \begin{pmatrix}
                   -1.1756\cdots-1.8968\cdots i \\ 1.2373\cdots+0.7965\cdots i \\ -0.5595\cdots-0.0626\cdots i
                  \end{pmatrix}+
		  \begin{pmatrix}
                   -1.1756\cdots+1.8968\cdots i \\ 1.2373\cdots-0.7965\cdots i \\ -0.5595\cdots+0.0626\cdots i
                  \end{pmatrix}\\
    &v'_4=\begin{pmatrix}
     0 \\ 0 \\ 1 
    \end{pmatrix}=\begin{pmatrix}
                   3.3575\cdots \\ -2.1057\cdots \\ 1.5981\cdots 
                  \end{pmatrix}+
		  \begin{pmatrix}
                   -1.6787\cdots-0.1878\cdots i \\ 1.0528\cdots-0.3636\cdots i \\ -0.2990\cdots+0.3036\cdots i
                  \end{pmatrix}+
		  \begin{pmatrix}
                   -1.6787\cdots+0.1878\cdots i \\ 1.0528\cdots+0.3636\cdots i \\ -0.2990\cdots-0.3036\cdots i
                  \end{pmatrix}
   \end{align*}
   \normalsize
   To confirm that the vector ${\footnotesize\begin{pmatrix} x \\ y \\ z \end{pmatrix}}\in\mathbb{R}^3$ is located in the interior of a $3$-dimensional cone generated by elements of $\mathcal{P}$, it is sufficient to check
   \begin{align*}
    (\ast)\  x\neq0 , y\neq0 , z\neq0 , x\neq y , y\neq z \text{ and } z\neq x.
   \end{align*}
   Thus, the conditions for $-v_1$ and $-v_2$ are reduced to the conditions for $v_1$ and $v_2$.
   It remains to check the condition ($\ast$) for $A^{-n}(v_1),A^{-n}(v_2),A^{-n}(v'_1),A^{-n}(v'_2),A^{-n}(v'_3)$ and $A^{-n}(v'_4)$ for all $n\geq1$ and now we prove only for $A^{-n}(v_1)$ (the other cases are proved in the same way).\par
   The equation
   \footnotesize
   \begin{align*}
    A^{-n}(v_1)=\begin{pmatrix}
                 X_n \\ Y_n \\ Z_n 
                \end{pmatrix}=\sigma_1^n\begin{pmatrix}
                                          3.2278\cdots \\ -2.0244\cdots \\ 1.5364\cdots 
                                         \end{pmatrix}
			     +\sigma_2^n\begin{pmatrix}
                                          -1.1139\cdots-2.9971\cdots i \\ 1.5122\cdots+1.4692\cdots i \\ -0.7682\cdots-0.2464\cdots i
                                         \end{pmatrix}
			     +\sigma_3^n\begin{pmatrix}
                                          -1.1139\cdots+2.9971\cdots i \\ 1.5122\cdots-1.4692\cdots i \\ -0.7682\cdots+0.2464\cdots i
                                         \end{pmatrix}                      
   \end{align*}
   \normalsize
   means that the coordinates in the first-term growth to $\pm\infty$, and the coordinates in the other terms converge to $0$ by $n\to\infty$.
   Write
   \footnotesize
   \begin{gather*}
    \sigma_1^n\begin{pmatrix}
                3.2278\cdots \\ -2.0244\cdots \\ 1.5364\cdots 
               \end{pmatrix}=\begin{pmatrix}
                              x_n \\ y_n \\ z_n 
                             \end{pmatrix},\\
    \sigma_2^n\begin{pmatrix}
                -1.1139\cdots-2.9971\cdots i \\ 1.5122\cdots+1.4692\cdots i \\ -0.7682\cdots-0.2464\cdots i
               \end{pmatrix}=\begin{pmatrix}
                              x'_n \\ y'_n \\ z'_n 
                             \end{pmatrix},
    \sigma_3^n\begin{pmatrix}
                -1.1139\cdots+2.9971\cdots i \\ 1.5122\cdots-1.4692\cdots i \\ -0.7682\cdots+0.2464\cdots i
               \end{pmatrix}=\begin{pmatrix}
                              x''_n \\ y''_n \\ z''_n 
                             \end{pmatrix}
   \end{gather*}
   \normalsize
   with $X_n=x_n+x'_n+x''_n$, $Y_n=y_n+y'_n+y''_n$, $Z_n=z_n+z'_n+z''_n$ and then
   \begin{align*} 
    \min\{\abs{x_n},\abs{y_n},\abs{z_n},\abs{x_n-y_n},\abs{y_n-z_n},\abs{z_n-x_n}\}=\abs{\sigma_1}^n\cdot1.5364\cdots,\\
    \max\{\abs{x'_n},\abs{y'_n},\abs{z'_n},\abs{x'_n-y'_n},\abs{y'_n-z'_n},\abs{z'_n-x'_n}\}=\abs{\sigma_2}^n\cdot5.1812\cdots,\\
    \max\{\abs{x''_n},\abs{y''_n},\abs{z''_n},\abs{x''_n-y''_n},\abs{y''_n-z''_n},\abs{z''_n-x''_n}\}=\abs{\sigma_3}^n\cdot5.1812\cdots
   \end{align*}
   and so
   \begin{align*} 
    \min\{\abs{X_n},\abs{Y_n},\abs{Z_n},\abs{X_n-Y_n},\abs{Y_n-Z_n},\abs{Z_n-X_n}\}\geq19.5936\cdots>0.
   \end{align*}
   Thus, the condition ($\ast$) is true for $A^{-n}(v_1)$, and the others are confirmed similarly. 
  \end{proof}
  Thus, by Proposition \ref{equation for dynamical degree}, $\lambda'=\lambda_1(f_{A^{-1}})$ satisfies the equation
  \begin{align*}
   \sum_{n=1}^{\infty} \Psi_{\mathcal{U},\mathcal{V}}(A^{-n})\cdot\frac{1}{{\lambda'}^n}=\sum_{n=1}^{\infty} \left(\sum_{v\in\mathcal{V}}\max_{u\in\mathcal{U}}\langle u,A^{-n}v\rangle\right)\cdot\frac{1}{{\lambda'}^n}=1.
  \end{align*}
  The first few terms of the sequences $A^{-n}v$ for $v\in\mathcal{V}$ are as below:
  \footnotesize
  \begin{align*}
   &A^{-1}v_1=\begin{pmatrix}
               47 \\ -29 \\ 22 
             \end{pmatrix},A^{-2}v_1=\begin{pmatrix}
                                      681 \\ -427 \\ 324 
                                     \end{pmatrix},A^{-3}v_1=\begin{pmatrix}
                                                              9887 \\ -6201 \\ 4706
                                                             \end{pmatrix},\ldots\\
   &A^{-1}v_2=\begin{pmatrix}
              82 \\ -51 \\ 39
             \end{pmatrix},A^{-2}v_2=\begin{pmatrix}
                                      1204 \\ -755 \\ 573
                                     \end{pmatrix},A^{-3}v_2=\begin{pmatrix}
                                                              17486 \\ -10967 \\ 8323
                                                             \end{pmatrix},\ldots\\
   &A^{-1}(-v_1)=\begin{pmatrix}
               -47 \\ 29 \\ -22 
             \end{pmatrix},A^{-2}(-v_1)=\begin{pmatrix}
                                         -681 \\ 427 \\ -324 
                                        \end{pmatrix},A^{-3}(-v_1)=\begin{pmatrix}
                                                                    -9887 \\ 6201 \\ -4706
                                                                   \end{pmatrix},\ldots\\
   &A^{-1}(-v_2)=\begin{pmatrix}
                 -82 \\ 51 \\ -39
                \end{pmatrix},A^{-2}(-v_2)=\begin{pmatrix}
                                            -1204 \\ 755 \\ -573
                                           \end{pmatrix},A^{-3}(-v_2)=\begin{pmatrix}
                                                                       -17486 \\ 10967 \\ -8323
                                                                      \end{pmatrix},\ldots
  \end{align*}
  \normalsize
  Similar to the proof of Lemma \ref{condition for the recurrence}, we can confirm that, for $n\geq1$, $\langle u,A^{-n}v_1\rangle$ are all maximized by $u={\footnotesize\begin{pmatrix}0 \\ -1 \\ 0\end{pmatrix}}$, $\langle u,A^{-n}v_2\rangle$ are all maximized by $u={\footnotesize\begin{pmatrix}0 \\ -1 \\ 0\end{pmatrix}}$, $\langle u,A^{-n}(-v_1)\rangle$ are all maximized by $u={\footnotesize\begin{pmatrix}-1 \\ 0 \\ 0\end{pmatrix}}$ and $\langle u,A^{-n}(-v_2)\rangle$ are all maximized by $u={\footnotesize\begin{pmatrix}-1 \\ 0 \\ 0\end{pmatrix}}$.
  Thus,
  \begin{align*}
   &\max_{u\in\mathcal{U}}\langle u,A^{-k}v_1\rangle=29,427,6201,90055,\ldots(k=1,2,3,4,\ldots)\\
   &\max_{u\in\mathcal{U}}\langle u,A^{-k}v_2\rangle=51,755,10967,159271,\ldots(k=1,2,3,4,\ldots)\\
   &\max_{u\in\mathcal{U}}\langle u,A^{-k}(-v_1)\rangle=47,681,9887,143585,\ldots(k=1,2,3,4,\ldots)\\
   &\max_{u\in\mathcal{U}}\langle u,A^{-k}(-v_2)\rangle=82,1204,17486,253944,\ldots(k=1,2,3,4,\ldots)
  \end{align*}
  and these sequences are linear recurrences that satisfy $a_n=15a_{n-1}-7a_{n-2}+a_{n-3}$, associated with $x^3-15x^2+7x-1$, the characteristic polynomial of $A^{-1}$.
  Thus, $P_n=\Psi_{\mathcal{U},\mathcal{V}}(A^{-n})$ is also a linear recurrence that satisfies $P_n=15P_{n-1}-7P_{n-2}+P_{n-3}$, and the first few terms are 
  \begin{align*}
   209,3067,44541,646855\ldots.
  \end{align*}
  Therefore, the equation is modified as
  \begin{align*}
   1=\sum_{n=1}^{\infty} \frac{P_n}{{\lambda'}^n}
  \end{align*}
  and
  \begin{align*}
   &\lambda'-209=\sum_{n=1}^{\infty} \frac{P_{n+1}}{{\lambda'}^n}\\
   &{\lambda'}^2-209\lambda'-3067=\sum_{n=1}^{\infty} \frac{P_{n+2}}{{\lambda'}^n}\\
   &{\lambda'}^3-209{\lambda'}^2-3067\lambda'-44541=\sum_{n=1}^{\infty} \frac{P_{n+3}}{{\lambda'}^n}.
  \end{align*}
  By using the equation $P_{n+3}=15P_{n+2}-7P_{n+1}+P_n$,
  \begin{align*}
   {\lambda'}^3-209{\lambda'}^2-3067\lambda'-44541=15({\lambda'}^2-209\lambda'-3067)-7(\lambda'-209)+1
  \end{align*}
  and so, ${\lambda'}^3-224{\lambda'}^2+75\lambda'=0$, and $\lambda'=223.6646\cdots$.
  In summary, the birational map $f_A$ in Theorem \ref{transcendental dynamical degree} satisfies
  \begin{align*}
   \lambda_0(f_A)=1,\  75\leq\lambda_1(f_A)\leq150,\  \lambda_2(f_A)=223.6646\cdots,\ \lambda_3(f_A)=1
  \end{align*}
  with $\lambda_1(f_A)$ transcendental, and the map $f_A$ is $2$-cohomologically hyperbolic.

\section{A $1$-cohomologically hyperbolic birational map}\label{1-cohomologically hyperbolic birational map}
 In this section, we consider another birational map with transcendental (first) dynamical degree and confirm that it is $1$-cohomologically hyperbolic.\par
 Instead of the matrix $A$ in Theorem \ref{transcendental dynamical degree}, we use
 \begin{align*}
  A_1:=\begin{pmatrix}
     56 & -19 & -17 \\ -16 & 6 & 5 \\ 207 & -71 & -63
    \end{pmatrix}.
 \end{align*}
 Define the birational map $f_{A_1}=L_{B^{-1}}\circ h_{-I}\circ L_{B}\circ h_{A_1}:\mathbb{P}^3\dashrightarrow\mathbb{P}^3$ for this matrix $A_1$.
 By birationality, $\lambda_0(f_{A_1})=\lambda_3(f_{A_1})=1$.\par
 In Section \ref{Transcendency of lambda_1}, we prove that $\lambda_1(f_{A_1})$ is transcendental, and estimate its size.
 In Section \ref{Calculation of lambda_2}, we calculate $\lambda_2(f_{A_1})$ explicitly (the value is algebraic).
 \subsection{Transcendency of $\lambda_1(f_{A_1})$}\label{Transcendency of lambda_1}
  Now
  \begin{align*}
   A_1=\begin{pmatrix}
      56 & -19 & -17 \\ -16 & 6 & 5 \\ 207 & -71 & -63
     \end{pmatrix}=\begin{pmatrix}
                    1 & 1 & 3 \\ 1 & 2 & 0 \\ 2 & 1 & 10
                   \end{pmatrix}
                   \begin{pmatrix}
                    0 & 1 & 0 \\ 0 & 0 & 1 \\ 1 & 0 & -1
                   \end{pmatrix}
                   \begin{pmatrix}
                    20 & -7 & -6 \\ -10 & 4 & 3 \\ -3 & 1 & 1
                   \end{pmatrix}
  \end{align*}
  and the characteristic polynomial of $A_1$ is $x^3+x^2-1$ and its roots are
  \begin{align*}
   \xi_1=-0.8774\cdots+0.7448\cdots i,\ \xi_2=-0.8774\cdots-0.7448\cdots i, \xi_3=0.7548\cdots.
  \end{align*}
  Now $\frac{\xi_1}{\xi_2}$ is not a root of unity, since its Galois conjugate $\frac{\xi_1}{\xi_3}$ has a modulus not equal to $1$.
  Thus, the angle of $\xi_1$ is not a rational multiple of $\pi$.
  Define $K:=\mathbb{Q}(\xi_1,\xi_2)$ as the splitting field of $x^3+x^2-1$.
  Therefore, it is enough to confirm the next conditions for the transcendency of $\lambda_1(f_{A_1})$ (see Section \ref{birational maps on P^N}).
  \begin{enumerate}
  \renewcommand{\labelenumi}{\rm{(\roman{enumi})}}
  \renewcommand{\theenumi}{\roman{enumi}}
   \item\label{(i)} $\sigma(v,w)\notin U_K$
   \item\label{(ii)} $\frac{\sigma(v,w)}{\sigma(v',w')}\notin U_K$ for $v,v'\in\mathcal{V}$, $w,w'\in\mathcal{W}$ unless both pairs of vectors are linearly dependent (over $\mathbb{R}$)
   \item\label{(iii)} Each element of $A_1^n(\mathcal{V}\cup\mathcal{P})$ is contained in the interior of some $3$-dimensional cone generated by elements of $\mathcal{P}$ for all $n\in\mathbb{Z}_{>0}$.
  \end{enumerate}
  Now
  \begin{align*}
    &\mathcal{V}=\left\{\begin{pmatrix}
                         1 \\ 1 \\ 0 
                        \end{pmatrix},
			\begin{pmatrix}
                         0 \\ 1 \\ 1 
                        \end{pmatrix},
			\begin{pmatrix}
                         -1 \\ -1 \\ 0 
                        \end{pmatrix},
			\begin{pmatrix}
                         0 \\ -1 \\ -1 
                        \end{pmatrix}\right\}\\
    &\mathcal{W}=\left\{\pm\begin{pmatrix}
                        1 \\ 0 \\ 0 
                       \end{pmatrix},
		       \pm\begin{pmatrix}
                        0 \\ 1 \\ 0 
                       \end{pmatrix},
		       \pm\begin{pmatrix}
                        0 \\ 0 \\ 1 
                       \end{pmatrix},
		       \pm\begin{pmatrix}
                        1 \\ -1 \\ 0 
                       \end{pmatrix},
	               \pm\begin{pmatrix}
                        0 \\ 1 \\ -1 
                       \end{pmatrix},
		       \pm\begin{pmatrix}
                        -1 \\ 0 \\ 1 
                       \end{pmatrix}\right\}\\
    &\mathcal{P}=\left\{\begin{pmatrix}
                         -1 \\ -1 \\ -1 
                        \end{pmatrix},
			\begin{pmatrix}
                         1 \\ 0 \\ 0 
                        \end{pmatrix},
			\begin{pmatrix}
                         0 \\ 1 \\ 0 
                        \end{pmatrix},
			\begin{pmatrix}
                         0 \\ 0 \\ 1 
                        \end{pmatrix}\right\}.
  \end{align*}
  By using SageMath \cite{sagemath}, (\ref{(i)}) and (\ref{(ii)}) are confirmed as below.
\begin{lstlisting}
sage: R.<x> = PolynomialRing(QQ)
....: K.<a> = (x^3 +x^2-1).splitting_field(); K
Number Field in a with defining polynomial x^6 + 5*x^5 + 27*x^4 + 65*x^3 + 171*x^2 + 235*x + 317
\end{lstlisting}
  Here we define the splitting field $K$ of $x^3 +x^2-1$.
\begin{lstlisting}
sage: G=K.galois_group()
....: A=matrix(K,[[56,-19,-17],[-16,6,5],[207,-71,-63]])
....: sol=A.eigenmatrix_right()
....: P=sol[0]
....: z1=P[0][0]
....: z2=P[1][1]
....: z3=P[2][2]
....: for i in range (0,6):
....:     if G[i](z1)!=z1 and G[i](z2)!=z2 and G[i](z3)==z3:
....:         G0=G[i]
....: print(G0)
(1,3)(2,4)(5,6)
\end{lstlisting}
  Here we define the complex conjugate on $K$.
\begin{lstlisting}
sage: v1=matrix(K,[[1],[1],[0]])
....: v2=matrix(K,[[0],[1],[1]])
....: v3=matrix(K,[[-1],[-1],[0]])
....: v4=matrix(K,[[0],[-1],[-1]])
....: W1=matrix(K,[[1],[0],[0]])
....: W2=matrix(K,[[0],[1],[0]])
....: W3=matrix(K,[[0],[0],[1]])
....: W4=matrix(K,[[1],[-1],[0]])
....: W5=matrix(K,[[0],[1],[-1]])
....: W6=matrix(K,[[-1],[0],[1]])
....: L1=[v1,v2,v3,v4]
....: L2=[W1,W2,W3,W4,W5,W6]
\end{lstlisting}
  Here we define the sets $\mathcal{V}$ and $\mathcal{W}$.
  Half of the elements of $\mathcal{W}$ can be omitted.
\begin{lstlisting}
sage: V=matrix(K,[[1,0,0]])
....: V1=matrix(K,[[1],[0],[0]])
....: Q=sol[1]
....: S=Q^(-1)
....: L3=[]
....: L4=[]
....: UK = UnitGroup(K)
....: for i in range (0,4):
....:     v=L1[i]
....:     for j in range (0,6):
....:         W=L2[j]
....:         w=Q*V1*V*S*v
....:         s=((W.transpose())*w)[0][0]
....:         t=-G0(s)/s
....:         L3.append([t,i,j])
....:         L4.append(t not in UK)
....: print(all(L4))
True
\end{lstlisting}
  Here we confirm (\ref{(i)}).
\begin{lstlisting}
sage: L5=[]
....: for i in range (0,24):
....:     t_1=L3[i][0]
....:     for j in range (i+1,24):
....:         t_2=L3[j][0]
....:         l1=L1[L3[i][1]]
....:         l2=L1[L3[j][1]]
....:         l3=L2[L3[i][2]]
....:         l4=L2[L3[j][2]]
....:         L5.append((((l1==l2 or l1==-l2) and (l3==l4 or l3==-l4))) or (t_1/t_2 not in UK))
....: print(all(L5))
True
\end{lstlisting}
  Here we confirm (\ref{(ii)}).\par
  It remains to show (\ref{(iii)}).
  It suffices to show that the iteration of the elements of $\mathcal{V}$ and $\mathcal{P}$ by $A_1$ are not on the planes $x=0,y=0,z=0,x=y,y=z$, or $z=x$.
  Now half of the cases for the elements of $\mathcal{V}$ can be reduced to the other half.
  For each iteration, the sequences $\{x_n\}_{n\geq1},\{y_n\}_{n\geq1},\{z_n\}_{n\geq1},\{x_n-y_n\}_{n\geq1},\{y_n-z_n\}_{n\geq1}$, and $\{z_n-x_n\}_{n\geq1}$ are linear recurrences associated with $x^3+x^2-1$.
  From now, we consider these $36$ sequences.
  Each sequence can be written as $c_1\xi_1^n+c_2\xi_2^n+c_3\xi_3^n$ and we would like to prove the next lemma.
  \begin{lemma}[{cf.\ \cite[Lemma 7.4]{BDJK24}}]\label{zero terms of linear recurrence}
   Let $\{a_N\}_{N\geq1}$ be arbitrarily one of the above $36$ sequences and write
   \begin{align*}
    a_N=c_1\xi_1^N+c_2\xi_2^N+c_3\xi_3^N
   \end{align*}
   for the above $\xi_1,\xi_2,\xi_3$ and the associated $c_1,c_2,c_3\in K$.\par
   If $N>7\cdot10^{18}$, then $a_N\neq0$.
  \end{lemma}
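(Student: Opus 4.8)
The strategy is to show that the equation $a_N=0$ forces $N$ to be bounded, by rewriting it as an exponentially small, \emph{nonzero} linear form in logarithms of algebraic numbers and then invoking an explicit Baker-type lower bound for such forms; the resulting bound on $N$ is then checked to lie below $7\cdot10^{18}$.

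\emph{Preliminary reductions.} For each vector under consideration the entries of $A_1^n v$ are integers, so every one of the $36$ sequences is $\mathbb{Z}$-valued; in particular $a_N=0$ or $|a_N|\geq1$. The coefficients $c_1,c_2,c_3$ are obtained by solving the Vandermonde system that prescribes $a_1,a_2,a_3$, hence $c_1,c_2,c_3\in K=\mathbb{Q}(\xi_1,\xi_2)=\mathbb{Q}(\xi_1,\xi_2,\xi_3)$. Since the set of $\mathbb{Q}$-linear recurrences satisfied by a $\mathbb{Q}$-valued sequence is cut out by $\mathbb{Q}$-linear equations, its minimal characteristic polynomial $\prod_{c_i\neq0}(x-\xi_i)$ lies in $\mathbb{Q}[x]$; as $\{\xi_1,\xi_2,\xi_3\}$ is a single Galois orbit (the cubic being irreducible), this polynomial must be $1$ or $x^3+x^2-1$. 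Hence either $a_N\equiv0$ — excluded for each of the $36$ sequences by the finite check $(a_1,a_2,a_3)\neq(0,0,0)$ — or all of $c_1,c_2,c_3$ are nonzero; assume henceforth the latter. From $x^3+x^2-1$ one reads off $\xi_2=\overline{\xi_1}$, $\xi_3\in\mathbb{R}$ and $\xi_1\xi_2\xi_3=1$, so with $\rho:=|\xi_1|=|\xi_2|$ we get $\rho^2|\xi_3|=1$, whence $|\xi_3|<1<\rho$ and
\begin{align*}
 \delta:=\frac{|\xi_3|}{\rho}=|\xi_3|^{3/2}\in(0,1).
\end{align*}
Recall also, as noted above, that $\xi_1/\xi_2$ is not a root of unity.

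\emph{The linear form.} Suppose $a_N=0$, i.e.\ $c_1\xi_1^N+c_2\xi_2^N=-c_3\xi_3^N$. Dividing by $c_2\xi_2^N$ produces
\begin{align*}
 \Lambda:=1+\frac{c_1}{c_2}\Bigl(\frac{\xi_1}{\xi_2}\Bigr)^{N}=-\frac{c_3}{c_2}\Bigl(\frac{\xi_3}{\xi_2}\Bigr)^{N},
\end{align*}
so that $|\Lambda|=|c_3/c_2|\,\delta^{N}$. If $\Lambda=0$ then $a_N=c_3\xi_3^N$, and for $N$ past a small explicit threshold the inequality $|c_3\xi_3^N|<1$ together with $a_N\in\mathbb{Z}$ forces $a_N=0$, hence $c_3=0$, contradicting the previous paragraph. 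Thus $\Lambda\neq0$, and writing $\alpha:=-c_1/c_2\in K$ and $\zeta:=\xi_1/\xi_2\in K$ (both of modulus $1$, with $\zeta$ not a root of unity) we obtain a nonzero linear form in two logarithms,
\begin{align*}
 0<|\alpha\zeta^{N}-1|=\Bigl|\frac{c_3}{c_2}\Bigr|\,\delta^{N},
\end{align*}
exponentially small in $N$.

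\emph{Baker's estimate and the numerical bound.} Apply an explicit lower bound for linear forms in two logarithms — for instance the theorem of Laurent–Mignotte–Nesterenko, or Matveev's bound specialised to $n=2$ — to $|\alpha\zeta^{N}-1|$, over $K$ with $[K:\mathbb{Q}]=6$, exponents $1$ and $N$, and Weil heights $h(\alpha)=h(c_1/c_2)$, $h(\zeta)=h(\xi_1/\xi_2)$; the latter are small and explicitly boundable (the $c_i$ are computed from the eigenvectors of $A_1$ over $K$, and $h(\xi_1/\xi_2)\leq2h(\xi_1)=\tfrac{2}{3}\log\rho$). This yields $\log|\alpha\zeta^{N}-1|\geq-C\log N$ for an effectively computable $C$, and combining with the displayed equality gives $N\log(1/\delta)\leq C\log N+\log|c_3/c_2|$, hence $N\leq N_0$ for an explicit $N_0$. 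Taking $C$ and the remaining constants as a maximum over the $36$ sequences and verifying $N_0\leq7\cdot10^{18}$ completes the proof. I expect this last step to be the real obstacle: one must choose a sufficiently sharp version of the linear-forms-in-logarithms estimate and control the heights $h(c_i/c_j)$ (i.e.\ analyse the eigenvectors of $A_1$) tightly enough that $N_0$ actually comes out below $7\cdot10^{18}$; everything preceding it is routine.
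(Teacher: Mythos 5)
Your strategy is the same one the paper uses: assume $a_N=0$, isolate a nonzero quantity of the form $(-c_1/c_2)(\xi_1/\xi_2)^N-1$, observe it is exponentially small (equal to $|c_3/c_2|\,|\xi_3/\xi_2|^N=|c_3/c_2|\,|\xi_1|^{-3N}$), and invoke an explicit lower bound for linear forms in logarithms to contradict this for large $N$. Two small points of divergence: the paper handles the branch ambiguity explicitly and ends up applying Baker--W\"ustholz to a linear form in \emph{three} logarithms, $\log(-c_1/c_2)+N\log(\xi_1/\xi_2)+k\cdot2\pi i$ with $k$ an integer of absolute value at most $N$ (treating $\log(-1)=\pi i$ as the third logarithm), whereas you propose a two-logarithm estimate (Laurent--Mignotte--Nesterenko or Matveev with $n=2$); and your derivation that the linear form is nonzero (via $c_3\neq0$) is in fact cleaner than the paper's, which invokes the separately computed fact $c_1\neq-c_2$.

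The genuine gap is exactly the one you flag yourself: you never carry out the numerical estimation, and the content of the lemma \emph{is} the explicit threshold $7\cdot10^{18}$. The paper closes this by computing, via SageMath, explicit bounds on the absolute logarithmic heights $h'(\xi_1/\xi_2)\leq\pi/6$, $h'(-c_1/c_2)<7$ (uniformly over the $36$ sequences), on $\log|c_3/c_2|<-0.8$, and on $3\log|\xi_1|>0.42$, and then plugs these into the Baker--W\"ustholz constant $18\cdot4!\cdot3^4\cdot(32\cdot6)^5\log(36)$ to obtain $-6.4\cdot10^{16}\log(2N)<-0.1-0.42N$, which fails for $N>7\cdot10^{18}$. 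Without that computation the proof is incomplete: the strategy is sound, but the claim that it yields the stated bound with any particular Baker-type theorem is unverified. (A minor slip in passing: from the Mahler measure of $x^3+x^2-1$ one gets $h(\xi_1)=\tfrac{2}{3}\log\rho$, so $2h(\xi_1)=\tfrac{4}{3}\log\rho$, not $\tfrac{2}{3}\log\rho$ as written; the computed value $h(\xi_1/\xi_2)\approx0.14\approx\log\rho$ is consistent with the corrected bound but not with yours.)
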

  \begin{remark}\label{coefficients are not zero}
   By calculating specifically, we get $\{a_1,a_2,a_3,\ldots\}\neq\{0,0,0,\ldots\}$ for all sequences, and so by considering the group action of the elements of $\mathrm{Gal}(K/\mathbb{Q})$ on $a_N$, we obtain $c_i\neq0$ ($1\leq i\leq3$).
   Also, by computation, we have $c_1\neq-c_2$ for every sequence.
  \end{remark}
  To prove this lemma, we use Theorem \ref{Baker-Wüstholz theorem}.
  From now, the branch of $\mathrm{log}(z)$ is fixed by $-\pi<\mathrm{Im}(\mathrm{log}(z))\leq\pi$ .
  For a number field $K$ with $[K:\mathbb{Q}]=d$, denote the set of the places of $K$ by $M_K$.
  Now for $v\in M_K$, the valuation $\abs{\cdot}_v$ is normalized by $[K_v:\mathbb{Q}_v]$ to satisfy the product formula.
  For $\alpha\in K$, define the height function
  \begin{align*}
   h(\alpha)=\sum_{v\in M_K}\mathrm{log}(\mathrm{max}\{1,\abs{\alpha}_v\})
  \end{align*}
  and
  \begin{align*}
   h'(\alpha)=\frac{1}{d}\mathrm{max}\{h(\alpha),\abs{\mathrm{log}(\alpha)},1\}.
  \end{align*}
  Also for a linear function $L=\beta_1x_1+\cdots+\beta_nx_n$ with $\beta_1,\ldots,\beta_n\in K$, define
  \begin{align*}
   h(L):=\sum_{v\in M_K}\mathrm{log}(\mathrm{max}\{\abs{\beta_1}_v,\ldots,\abs{\beta_n}_v\}).
  \end{align*}
  and
  \begin{align*}
   h'(L)=\frac{1}{d}\mathrm{max}\{h(L),1\}.
  \end{align*}
  \begin{theorem}[{\cite{BW93}}]\label{Baker-Wüstholz theorem}
   Let $\alpha_1,\ldots,\alpha_k$ be algebraic numbers, not $0$ or $1$, and define $K:=\mathbb{Q}(\alpha_1,\ldots,\alpha_k)$ with $[K:\mathbb{Q}]=d$.
   Let $L=n_1x_1+\cdots+n_kx_k$ be a linear function with rational integer coefficients.
   If $L(\mathrm{log}(\alpha_1),\ldots,\mathrm{log}(\alpha_k))\neq0$, then,
   \begin{align*}
    \mathrm{log}\abs{L(\mathrm{log}(\alpha_1),\ldots,\mathrm{log}(\alpha_k))}&=\mathrm{log}\abs{n_1\mathrm{log}(\alpha_1)+\cdots+n_k\mathrm{log}(\alpha_k)}\\
                                                                             &>-18(k+1)!k^{k+1}(32d)^{k+2}\mathrm{log}(2kd)h'(\alpha_1)\cdots h'(\alpha_k)h'(L).
   \end{align*}
  \end{theorem}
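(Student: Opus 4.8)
The plan is to argue by contradiction using the Baker--Wüstholz theorem (Theorem~\ref{Baker-Wüstholz theorem}), in the spirit of \cite[Lemma 7.4]{BDJK24}. Fix one of the $36$ sequences and suppose $a_N=0$ for some $N>7\cdot10^{18}$. Each $a_N$ is a rational integer, hence real, so by uniqueness of the representation we have $c_2=\overline{c_1}$ and $c_3\in\mathbb{R}$; by Remark~\ref{coefficients are not zero} all three $c_i$ are nonzero and $c_1\neq -c_2$. Since the product of the three roots of $x^3+x^2-1$ equals $1$, one has $\lvert\xi_1\rvert=\lvert\xi_2\rvert>1>\lvert\xi_3\rvert$. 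From $c_1\xi_1^N+c_2\xi_2^N=-c_3\xi_3^N$ (with $c_2\xi_2^N\neq0$) we divide by $c_2\xi_2^N$ and set
\begin{align*}
 u:=-\frac{c_1}{c_2}\left(\frac{\xi_1}{\xi_2}\right)^{N},\qquad u-1=\frac{-c_3\xi_3^N}{c_2\xi_2^N},\qquad \lvert u-1\rvert=\left\lvert\frac{c_3}{c_2}\right\rvert\left(\frac{\lvert\xi_3\rvert}{\lvert\xi_1\rvert}\right)^{N}.
\end{align*}
Because $\lvert\xi_3\rvert/\lvert\xi_1\rvert<1$, the quantity $\lvert u-1\rvert$ is exponentially small in $N$, and $u\neq1$ since $c_3\neq0$.

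Next I would turn $\log u$ (principal branch) into a $\mathbb{Z}$-linear form in three logarithms. Writing $\alpha_1=\xi_1/\xi_2$, $\alpha_2=-c_1/c_2$, $\alpha_3=-1$, so that $u=\alpha_1^N\alpha_2$, there is an integer $r$ with
\begin{align*}
 \Lambda:=\log u=N\log\alpha_1+\log\alpha_2+2r\log\alpha_3,
\end{align*}
where $2r\log\alpha_3=2\pi i r$ is the branch correction; since $\lvert\arg(\xi_1/\xi_2)\rvert<\pi$ one gets $\lvert r\rvert\le N$. All three $\alpha_i$ lie in $K=\mathbb{Q}(\xi_1,\xi_2)$, which is Galois over $\mathbb{Q}$ and hence stable under complex conjugation, so the $c_i$ lie in $K$ as well; and none of the $\alpha_i$ is $0$ or $1$, because $\xi_1\neq\xi_2$ gives $\alpha_1\neq1$ while $c_1\neq0$ and $c_1\neq -c_2$ give $\alpha_2\neq0,1$. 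Finally $\Lambda\neq0$, because $u\neq1$.

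Now I would apply Theorem~\ref{Baker-Wüstholz theorem} with $k=3$, $d=[K:\mathbb{Q}]=6$, and $L=Nx_1+x_2+2rx_3$. Here $h'(\alpha_1)$ and $h'(\alpha_3)=\pi/6$ are universal, and $h'(\alpha_2)$ takes only finitely many values as the sequence ranges over the $36$ possibilities, so $h'(\alpha_1)h'(\alpha_2)h'(\alpha_3)$ is bounded by an explicit constant (computable with SageMath). The coefficients $N,1,2r$ of $L$ are coprime integers with $\lvert r\rvert\le N$, whence $h(L)=6\log\max\{N,2\lvert r\rvert\}\le 6\log(2N)$ and $h'(L)\le\log(2N)$. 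Thus Theorem~\ref{Baker-Wüstholz theorem} yields a constant $C>0$, depending only on the fixed data, with $\log\lvert\Lambda\rvert>-C\log(2N)$. On the other hand, once $\lvert u-1\rvert<1/2$ (which fails only for finitely many explicitly bounded $N$) we have $\lvert\Lambda\rvert=\lvert\log u\rvert\le2\lvert u-1\rvert$, so
\begin{align*}
 \log\lvert\Lambda\rvert\le\log2+\log\left\lvert\frac{c_3}{c_2}\right\rvert-N\log\frac{\lvert\xi_1\rvert}{\lvert\xi_3\rvert}.
\end{align*}
Combining the two estimates gives $N\log(\lvert\xi_1\rvert/\lvert\xi_3\rvert)<C\log(2N)+O(1)$, which is impossible once $N$ exceeds an explicit threshold $N_0$; substituting the numerical data (notably $\lvert\xi_1\rvert/\lvert\xi_3\rvert\approx1.52$ and the computed heights $h'(\alpha_i)$) into the Baker--Wüstholz constant shows $N_0\le7\cdot10^{18}$, contradicting $N>7\cdot10^{18}$.

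The conceptual steps are short; the real work, and the place an error is most likely to hide, is the numerical bookkeeping of the last paragraph: controlling the branch integer $r$ so that $\lvert r\rvert\le N$, computing $h(L)$ and the values $h'(\alpha_i)$ for each of the $36$ sequences, and plugging these together with $k=3$, $d=6$ into the enormous constant $18(k+1)!\,k^{k+1}(32d)^{k+2}\log(2kd)$ carefully enough to confirm that the resulting bound on $N$ is at most $7\cdot10^{18}$. One should also note that the hypotheses $c_i\neq0$ and $c_1\neq -c_2$ from Remark~\ref{coefficients are not zero} are exactly what legitimize the division defining $u$ and guarantee $\alpha_2\notin\{0,1\}$ and $\Lambda\neq0$, so the lemma genuinely rests on them.
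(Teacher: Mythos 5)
Your text does not prove the statement in question at all. The statement is the Baker--W\"ustholz theorem itself: the general lower bound $\log\abs{n_1\log\alpha_1+\cdots+n_k\log\alpha_k}>-18(k+1)!\,k^{k+1}(32d)^{k+2}\log(2kd)\,h'(\alpha_1)\cdots h'(\alpha_k)h'(L)$ for an arbitrary nonvanishing integral linear form in logarithms of algebraic numbers. This is a deep result in transcendence theory which the paper does not prove; it is quoted verbatim from \cite{BW93} and used as a black box. What you wrote is instead an argument by contradiction that \emph{invokes} Theorem~\ref{Baker-Wüstholz theorem} to rule out vanishing terms $a_N=c_1\xi_1^N+c_2\xi_2^N+c_3\xi_3^N$ for $N>7\cdot10^{18}$ --- that is, you reproduced (quite faithfully, down to the branch correction $2\pi i r$ with $\abs{r}\le N$, the choice $k=3$, $d=6$, the SageMath height bounds, and the final numerical threshold) the paper's proof of Lemma~\ref{zero terms of linear recurrence}, which is a different statement appearing just before the theorem and which the paper proves by applying the theorem.

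As a proof of the stated theorem your argument is therefore circular and vacuous: it assumes the very estimate it is supposed to establish, and its conclusion concerns only the three specific numbers $\xi_1/\xi_2$, $-c_1/c_2$, $-1$ attached to the $36$ recurrences, not arbitrary $\alpha_1,\ldots,\alpha_k$. An actual proof of Theorem~\ref{Baker-Wüstholz theorem} would require the Baker--W\"ustholz machinery (auxiliary functions or interpolation determinants, multiplicity and zero estimates on commutative group varieties, and the explicit bookkeeping that produces the constant $18(k+1)!\,k^{k+1}(32d)^{k+2}\log(2kd)$), none of which appears in your proposal; within this paper the correct move is simply to cite \cite{BW93}. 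Had the target been Lemma~\ref{zero terms of linear recurrence}, your write-up would essentially match the paper's proof, including its reliance on Remark~\ref{coefficients are not zero} for $c_i\neq0$ and $c_1\neq-c_2$; but for the statement actually posed it misses the point entirely.
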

  \begin{proof}[Proof of Lemma \ref{zero terms of linear recurrence}]
   The proof is proceeding as in \cite[Lemma 7.4]{BDJK24}.\par
   If $c_1\xi_1^N+c_2\xi_2^N+c_3\xi_3^N=0$, then
   \begin{align*}
    -c_1\left(\frac{\xi_1}{\xi_2}\right)^N-c_2=c_3\left(\frac{\xi_3}{\xi_2}\right)^N
   \end{align*}
   and so
   \begin{align*}
    \abs[\bigg]{\left(-\frac{c_1}{c_2}\right)\left(\frac{\xi_1}{\xi_2}\right)^N-1}=\abs[\bigg]{\frac{c_3}{c_2}}\abs[\bigg]{\frac{\xi_3}{\xi_2}}^N=\abs[\bigg]{\frac{c_3}{c_2}}\abs[\bigg]{\frac{1}{\xi_1}}^{3N}.
   \end{align*}
   By writing
   \begin{align*}
    \left(-\frac{c_1}{c_2}\right)\left(\frac{\xi_1}{\xi_2}\right)^N=e^{i\theta}\quad(-\pi<\theta\leq\pi)
   \end{align*}
   and using the inequality
   \begin{align*}
    \abs{e^{i\theta}-1}\geq\frac{\abs{\theta}}{2},
   \end{align*}
   this implies
   \begin{align*}
    \abs[\bigg]{\mathrm{log}\left(-\frac{c_1}{c_2}\right)+N\mathrm{log}\left(\frac{\xi_1}{\xi_2}\right)+k\cdot2\pi i}&=\abs[\bigg]{\mathrm{log}\left(\left(-\frac{c_1}{c_2}\right)\cdot\left(\frac{\xi_1}{\xi_2}\right)^N\right)}\\
                                                                                                                     &\leq2\cdot\abs{e^{i\theta}-1}\\
                                                                                                                     &=2\cdot\abs[\bigg]{\frac{c_3}{c_2}}\cdot\abs[\bigg]{\frac{1}{\xi_1}}^{3N}
   \end{align*}
   for $-N\leq k\leq N$.
   The left side is not zero by Remark \ref{coefficients are not zero}, and so by applying Theorem \ref{Baker-Wüstholz theorem} for $K=\mathbb{Q}(\xi_1,\xi_2)=\mathbb{Q}(\frac{\xi_1}{\xi_2})$,
   \begin{align*}
    &-18\cdot4!\cdot3^4\cdot(32\cdot6)^5\cdot\mathrm{log}(2\cdot3\cdot6)\cdot h'\left(\frac{\xi_1}{\xi_2}\right)h'\left(-\frac{c_1}{c_2}\right)\cdot h'(-1)\cdot\mathrm{max}\{\mathrm{log}\abs{2k},\mathrm{log}(N),1\}\\
    &<\mathrm{log}\abs[\bigg]{\mathrm{log}\left(-\frac{c_1}{c_2}\right)+N\mathrm{log}\left(\frac{\xi_1}{\xi_2}\right)+2k\mathrm{log}(-1)}
    \leq\mathrm{log}\left(2\cdot\abs[\bigg]{\frac{c_3}{c_2}}\cdot\abs[\bigg]{\frac{1}{\xi_1}}^{3N}\right)\\
    &=\mathrm{log}(2)+\mathrm{log}\abs[\bigg]{\frac{c_3}{c_2}}-3N\mathrm{log}\abs{\xi_1}.
   \end{align*}
   Now
   \begin{align*}
    h'(-1)=\frac{1}{6}\mathrm{max}\{h(-1),\abs{\mathrm{log}(-1)},1\}=\frac{\pi}{6}.
   \end{align*}
   We calculate some of the terms by using SageMath \cite{sagemath}.
\begin{lstlisting}
sage: K.<t>=NumberField(x^6 + 5*x^5 + 27*x^4 + 65*x^3 + 171*x^2 + 235*x + 317)
....: A=matrix(K,[[56,-19,-17],[-16,6,5],[207,-71,-63]])
....: sol=A.eigenmatrix_right()
....: P=sol[0]
....: z1=P[0][0]
....: z2=P[1][1]
....: z3=P[2][2]
....: print(((z1/z2).global_height())*K.absolute_degree())
0.843598722968886
\end{lstlisting}
   Thus, $h'\left(\frac{\xi_1}{\xi_2}\right)\leq\frac{1}{6}\mathrm{max}\left\{0.8435\cdots,\pi,1\right\}=\frac{\pi}{6}$.
\begin{lstlisting}
sage: Q=sol[1]
....: S=Q^(-1)
....: v1=matrix(K,[[1],[1],[0]])
....: v2=matrix(K,[[0],[1],[1]])
....: v3=matrix(K,[[-1],[-1],[0]])
....: v4=matrix(K,[[0],[-1],[-1]])
....: v5=matrix(K,[[-1],[-1],[-1]])
....: v6=matrix(K,[[1],[0],[0]])
....: w1=matrix(K,[[1,0,0]])
....: w2=matrix(K,[[0,1,0]])
....: w3=matrix(K,[[0,0,1]])
....: w4=matrix(K,[[1,-1,0]])
....: w5=matrix(K,[[0,1,-1]])
....: w6=matrix(K,[[-1,0,1]])
....: L1=[v1,v2,v3,v4,v5,v6]
....: L2=[w1,w2,w3,w4,w5,w6]
....: L3=[]
....: for i in range (0,6):
....:     v=S*L1[i]
....:     for j in range (0,6):
....:         w=L2[j]*Q
....:         c1=w[0][0]*v[0][0]
....:         c2=w[0][1]*v[1][0]
....:         c3=w[0][2]*v[2][0]
....:         h_1=((-c1/c2).global_height())*K.absolute_degree()
....:         L3.append(h_1)
....: max(L3)
38.9601692717445
\end{lstlisting}
   Thus, $h'\left(-\frac{c_1}{c_2}\right)\leq\frac{1}{6}\mathrm{max}\left\{38.9601\cdots,\pi,1\right\}<7$.
\begin{lstlisting}
sage: A=matrix([[56,-19,-17],[-16,6,5],[207,-71,-63]])
....: sol=A.eigenmatrix_right()
....: P=sol[0]
....: z1=P[0][0]
....: z2=P[1][1]
....: z3=P[2][2]
....: for i in range (0,2):
....:     z=P[i][i]
....:     if abs(z)<1:
....:         break
....: i0=i
....: for i in range (0,2):
....:     z=P[i][i]
....:     if abs(z)>1:
....:         break
....: i1=i
....: print(log(float(abs(P[i1][i1]))))
0.14059978716148083
\end{lstlisting}
   Thus, $3N\mathrm{log}\abs{\xi_1}>0.42N$.
\begin{lstlisting}
sage: Q=sol[1]
....: S=Q^(-1)
....: v1=matrix(CC,[[1],[1],[0]])
....: v2=matrix(CC,[[0],[1],[1]])
....: v3=matrix(CC,[[-1],[-1],[-1]])
....: v4=matrix(CC,[[1],[0],[0]])
....: v5=matrix(CC,[[0],[1],[0]])
....: v6=matrix(CC,[[0],[0],[1]])
....: w1=matrix(CC,[[1,0,0]])
....: w2=matrix(CC,[[0,1,0]])
....: w3=matrix(CC,[[0,0,1]])
....: w4=matrix(CC,[[1,-1,0]])
....: w5=matrix(CC,[[0,1,-1]])
....: w6=matrix(CC,[[-1,0,1]])
....: L1=[v1,v2,v3,v4,v5,v6]
....: L2=[w1,w2,w3,w4,w5,w6]
....: L3=[]
....: for i in range (0,6):
....:     v=S*L1[i]
....:     for j in range (0,6):
....:         w=L2[j]*Q
....:         c3=w[0][i0]*v[i0][0]
....:         c2=w[0][i1]*v[i1][0]
....:         c=log(float(abs(c3/c2)))
....:         L3.append(c)
....: max(L3)
-0.8301418502969936
\end{lstlisting}
   Thus, $\mathrm{log}\abs[\Big]{\frac{c_3}{c_2}}<-0.8$.
   Therefore, the above inequality implies
   \begin{align*}
    -3.3\cdot10^{16}\cdot\frac{\pi}{6}\cdot7\cdot\frac{\pi}{6}\cdot\mathrm{max}\{\mathrm{log}\abs{2k},\mathrm{log}(N),1\}<0.7-0.8-0.42N
   \end{align*}
   and so
   \begin{align*}
    -6.4\cdot10^{16}\cdot\mathrm{log}(2N)<-0.1-0.42N
   \end{align*}
   and this contradicts when $N>7\cdot10^{18}$.
  \end{proof}
  Thus, the condition (\ref{(iii)}) should be checked for only $n\leq7\cdot10^{18}$.
  The first three terms of each sequence are stated below.
\begin{lstlisting}
sage: A=matrix([[56,-19,-17],[-16,6,5],[207,-71,-63]])
....: v1=matrix([[1],[1],[0]])
....: v2=matrix([[0],[1],[1]])
....: v3=matrix([[-1],[-1],[-1]])
....: v4=matrix([[1],[0],[0]])
....: v5=matrix([[0],[1],[0]])
....: v6=matrix([[0],[0],[1]])
....: w1=matrix([[1,0,0]])
....: w2=matrix([[0,1,0]])
....: w3=matrix([[0,0,1]])
....: w4=matrix([[1,-1,0]])
....: w5=matrix([[0,1,-1]])
....: w6=matrix([[-1,0,1]])
....: L1=[v1,v2,v3,v4,v5,v6]
....: L2=[w1,w2,w3,w4,w5,w6]
....: L3=[]
....: for i in range (0,6):
....:     v=L1[i]
....:     for j in range (0,6):
....:         w=L2[j]
....:         a0=w*v
....:         a1=w*A*v
....:         a2=w*A*A*v
....:         L=[a0,a1,a2]
....:         L3.append(L)
....: print(L3)
[[[1], [37], [-50]], [[1], [-10], [28]], [[0], [136], [-199]], [[0], [47], [-78]], [[1], [-146], [227]], [[-1], [99], [-149]], [[0], [-36], [53]], [[1], [11], [-28]], [[1], [-134], [209]], [[-1], [-47], [81]], [[0], [145], [-237]], [[1], [-98], [156]], [[-1], [-20], [26]], [[-1], [5], [-15]], [[-1], [-73], [104]], [[0], [-25], [41]], [[0], [78], [-119]], [[0], [-53], [78]], [[1], [56], [-79]], [[0], [-16], [43]], [[0], [207], [-313]], [[1], [72], [-122]], [[0], [-223], [356]], [[-1], [151], [-234]], [[0], [-19], [29]], [[1], [6], [-15]], [[0], [-71], [114]], [[-1], [-25], [44]], [[1], [77], [-129]], [[0], [-52], [85]], [[0], [-17], [24]], [[0], [5], [-13]], [[1], [-63], [95]], [[0], [-22], [37]], [[-1], [68], [-108]], [[1], [-46], [71]]]
\end{lstlisting}
  Now the sequences are all satisfying the recurrence relation $a_n=-a_{n-1}+a_{n-3}$.\par
  We number these sequences from $1$ to $36$ in order.
  The sequences in the $1, 2, 5, 6, 8, 9, 10$, $12, 13, 14, 15, 19, 22, 24, 26, 28, 29, 33, 35, 36$-th places have non-zero initial terms, and the others start from zero.
  When considering these sequences in $\mathbb{Z}/p\mathbb{Z}$, then, by the pigeonhole principle, the sequences have some repetitive part and by the backward relation $a_{n-3}=a_{n}+a_{n-1}$, the repetitive part starts with the initial term.\par  
  First, we consider the former cases.
\begin{lstlisting}
sage: S=[1,2,5,6,8,9,10,12,13,14,15,19,22,24,26,28,29,33,35,36]
....: s=len(S)
....: L4=[]
....: for p in range (5,60):
....:     L4=[p]
....:     for i in range (0,s):
....:         v=L3[S[i]-1]
....:         a=v[0]%p
....:         b=v[1]%p
....:         c=v[2]%p
....:         a0=a
....:         b0=b
....:         c0=c
....:         A=0
....:         B=0
....:         C=0
....:         j=3
....:         L=[]
....:         while A!=a0 or B!=b0 or C!=c0:
....:             A=b
....:             B=c
....:             C=(a-c)%p
....:             a=A
....:             b=B
....:             c=C
....:             if c==0:
....:                 L.append(j)
....:             if L!=[]:
....:                 break
....:             j=j+1
....:         if L==[]:
....:             L4.append(S[i])
....:     if L4!=[p]:
....:         print(L4)
[20, 1, 2, 33]
[28, 1, 5, 9, 12, 13, 14, 26, 36]
[35, 2, 5, 6, 8, 9, 12, 14, 26, 29, 33]
[40, 1, 2, 12, 13, 14, 33]
[43, 10, 14, 19, 26]
[44, 9, 10, 12]
[45, 6, 15, 33]
[55, 9, 10, 12, 15, 22, 28, 33]
[56, 1, 2, 5, 8, 9, 12, 13, 14, 15, 26, 36]
[59, 2, 6, 9, 10, 19, 22, 24, 29, 35]
\end{lstlisting}
  This means that the zero term does not appear in these sequences when considering in $\mathbb{Z}/p\mathbb{Z}$ for some $p\in\mathbb{Z}$, and so in $\mathbb{Z}$.\par
  Next, we consider the latter cases.
\begin{lstlisting}
sage: S=[3,4,7,11,16,17,18,20,21,23,25,27,30,31,32,34]
....: s=len(S)
....: for i in range (0,s):
....:     S1=[1]
....:     for p in range(5,2000):
....:         k=S[i]
....:         v=L3[k-1]
....:         a=v[0]%p
....:         b=v[1]%p
....:         c=v[2]%p
....:         a0=a
....:         b0=b
....:         c0=c
....:         A=0
....:         B=0
....:         C=0
....:         j=3
....:         L=[]
....:         while A!=a0 or B!=b0 or C!=c0:
....:             A=b
....:             B=c
....:             C=(a-c)%p
....:             a=A
....:             b=B
....:             c=C
....:             if c==0:
....:                 L.append(j)
....:             if a==0:
....:                 if B!=b0 or C!=c0:
....:                     L.append(0)
....:                     L.append(0)
....:                     break
....:             j=j+1
....:         if len(L)==1:
....:             S1[0]=lcm(S1[0],j-3)
....:         if S1[0]>7*10**18:
....:              print(S[i],S1[0])
....:              break
3 197856007040168436960
4 3182657909595174410400
7 402266188667773029600
11 1028275312686859036800
16 954233501342344423200
17 1867369751282514679200
18 65028548896575818400
20 46999062546010454880
21 88051406847705100800
23 208338771542040266400
25 121699448915896051200
27 2006897833407314564640
30 12844334653156092240
31 244395985805903131200
32 12857036988550154400
34 27221868362904415200
\end{lstlisting}
  For a fixed sequence, the zero term of the sequence appears only once in the repetitive part when considering in $\mathbb{Z}/p\mathbb{Z}$ for some $p$.
  The above result means that the least common multiple of the lengths of such repetitive parts can be greater than $7\cdot10^{18}$.\par
  To conclude, by combining with Lemma \ref{zero terms of linear recurrence}, the condition (\ref{(iii)}) is confirmed.
  Thus, the assumption in Proposition \ref{equation for dynamical degree} holds for $A_1=\begin{pmatrix}
                                                                                          56 & -19 & -17 \\ -16 & 6 & 5 \\ 207 & -71 & -63
                                                                                         \end{pmatrix}$, and $\lambda_1(f_{A_1})$ is transcendental.
  Now
  \begin{align*}
   h_{A_1}:[x_0:x_1:x_2:x_3]\mapsto[x_0^{73}x_1^{16}x_2^{71}x_3^{63}:x_0^{53}x_1^{72}x_2^{52}x_3^{46}:x_0^{78}x_2^{77}x_3^{68}:x_1^{223}].
  \end{align*}
  By
  \footnotesize
  \begin{align*}
   \lambda_1(f_{A_1})\geq\Psi_{\mathcal{U},\mathcal{V}}(A_1)&=\sum_{v\in\mathcal{V}}\max_{u\in\mathcal{U}}\langle u,A_1v\rangle\\
                                 &\hspace{-2cm}=\max_{u\in\mathcal{U}}\left\langle u,\begin{pmatrix}
                                                                     37 \\ -10 \\ 136 
                                                                    \end{pmatrix}\right\rangle
                                   +\max_{u\in\mathcal{U}}\left\langle u,\begin{pmatrix}
                                                                     -36 \\ 11 \\ -134
                                                                    \end{pmatrix}\right\rangle
                                   +\max_{u\in\mathcal{U}}\left\langle u,\begin{pmatrix}
                                                                     -37 \\ 10 \\ -136
                                                                    \end{pmatrix}\right\rangle
                                   +\max_{u\in\mathcal{U}}\left\langle u,\begin{pmatrix}
                                                                     36 \\ -11 \\ 134
                                                                    \end{pmatrix}\right\rangle\\
                                  &\hspace{-2cm}=10+134+136+11=291
  \end{align*}
  \normalsize
  and
  \begin{align*}
   \lambda_1(f_{A_1})\leq\mathrm{deg}(f_{A_1})&\leq\mathrm{deg}(L_{B^{-1}})\cdot\mathrm{deg}(h_{-I})\cdot\mathrm{deg}(L_{B})\cdot\mathrm{deg}(h_{A_1})\\
                                      &=1\cdot3\cdot1\cdot223=669,
  \end{align*}
  it is calculated that $291\leq\lambda_1(f_{A_1})\leq669$.
 \subsection{The calculation of $\lambda_2(f_{A_1})$}\label{Calculation of lambda_2}
  As in Section \ref{Dynamical degrees} and Section \ref{2-cohomologically hyperbolic birational map}, $\lambda_2(f_{A_1})=\lambda_1(f_{A_1^{-1}})$.
  Now
  \begin{align*}
   A_1^{-1}=\begin{pmatrix}
           -23 & 10 & 7 \\ 27 & -9 & -8 \\ -106 & 43 & 32
          \end{pmatrix}.
  \end{align*}
  As Lemma \ref{condition for the recurrence}, we confirm the next condition.
  \begin{lemma}\label{condition for the recurrence2}
   Any element of $A_1^{-n}(\mathcal{V}\cup\mathcal{P})$ is contained in the interior of a $3$-dimensional cone generated by elements of $\mathcal{P}$ for all $n\in\mathbb{Z}_{>0}$,
  \end{lemma}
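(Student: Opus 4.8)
The plan is to copy the proof of Lemma~\ref{condition for the recurrence} almost verbatim, because $A_1^{-1}$ again has a single dominant eigenvalue which is moreover real. Indeed, the characteristic polynomial of $A_1^{-1}$ is the reciprocal polynomial $x^3-x-1$ of $x^3+x^2-1$; it has one real root $\sigma_1=1.3247\cdots$ and a complex conjugate pair $\sigma_2,\sigma_3$ with $\abs{\sigma_2}=\abs{\sigma_3}=\sigma_1^{-1/2}=0.8689\cdots<1$. As recalled there, to show a vector lies in the interior of a $3$-dimensional cone generated by $\mathcal{P}$ it suffices to check condition $(\ast)$: its three coordinates are nonzero and pairwise distinct. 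Since $(\ast)$ is invariant under $v\mapsto -v$ and $-v_1,-v_2\in\mathcal{V}$, it is enough to verify $(\ast)$ for $A_1^{-n}v$ with $n\geq1$ and $v$ ranging over the six vectors $(1,1,0)^{T},(0,1,1)^{T},(-1,-1,-1)^{T},(1,0,0)^{T},(0,1,0)^{T},(0,0,1)^{T}$.

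Fix such a $v$ and decompose it along the eigenspaces of $A_1^{-1}$, say $v=v^{(1)}+v^{(2)}+v^{(3)}$ with $v^{(i)}$ in the $\sigma_i$-eigenspace, so that
\begin{align*}
 A_1^{-n}v=\sigma_1^{n}v^{(1)}+\sigma_2^{n}v^{(2)}+\sigma_3^{n}v^{(3)}.
\end{align*}
A direct numerical computation of the eigenvectors shows that for each of the six $v$ the dominant component $v^{(1)}$ is nonzero and that its three coordinates are nonzero and pairwise distinct; write $c_v>0$ for the minimum of the six quantities $\abs{x^{(1)}},\abs{y^{(1)}},\abs{z^{(1)}},\abs{x^{(1)}-y^{(1)}},\abs{y^{(1)}-z^{(1)}},\abs{z^{(1)}-x^{(1)}}$, and $C_v>0$ for the analogous maximum attached to $v^{(2)}$ (which equals the one attached to $v^{(3)}=\overline{v^{(2)}}$). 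Then, exactly as in Lemma~\ref{condition for the recurrence},
\begin{align*}
 \min\left\{\abs{X_n},\abs{Y_n},\abs{Z_n},\abs{X_n-Y_n},\abs{Y_n-Z_n},\abs{Z_n-X_n}\right\}\geq\sigma_1^{n}c_v-2\abs{\sigma_2}^{n}C_v,
\end{align*}
where $(X_n,Y_n,Z_n)=A_1^{-n}v$. Because $\sigma_1>1>\abs{\sigma_2}$, the quantity $\bigl(\sigma_1/\abs{\sigma_2}\bigr)^{n}$ is increasing in $n$, so the right-hand side is positive for every $n\geq1$ as soon as $\sigma_1/\abs{\sigma_2}>2C_v/c_v$, and in any case for all $n\geq n_0$ with an explicit small $n_0$; this yields $(\ast)$ for all large $n$, and for the finitely many remaining $n<n_0$ one simply evaluates the integer vectors $A_1^{-n}v$ and checks $(\ast)$ by hand (or with SageMath, as elsewhere in this section).

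The one point needing care is that, unlike in Lemma~\ref{condition for the recurrence} where the dominant eigenvalue was $14.52\cdots$, here $\sigma_1\approx1.3247$ is only modestly larger than $1$, so the gap $\sigma_1^{n}c_v-2\abs{\sigma_2}^{n}C_v$ at $n=1$ may be small or even negative for some $v$. One must therefore actually compute the constants $c_v,C_v$ from the explicit eigenvector data and, if necessary, peel off the first one or two values of $n$ and dispose of them by direct integer computation; one also records along the way that the dominant eigenvector has no zero or repeated coordinate, so that the dominant term alone forces $(\ast)$ with a definite margin. Apart from this bookkeeping the argument is routine, and it establishes the lemma.
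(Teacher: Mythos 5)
Your proposal is correct and follows essentially the same route as the paper: decompose along the eigenspaces of $A_1^{-1}$, bound the dominant $\tau_1$-component from below and the conjugate pair from above, conclude $(\ast)$ for all $n$ beyond an explicit threshold, and verify the remaining small $n$ directly. The one quantitative detail your "peel off the first one or two values of $n$" underestimates is the size of the threshold: with the paper's actual constants ($c_{v_1}\approx1.39$, $C_{v_1}\approx88.06$, $\tau_1\approx1.3247$), positivity only kicks in at $n\geq12$, so eleven initial terms must be checked by hand — but you explicitly allow for "finitely many remaining $n<n_0$", so this is a matter of bookkeeping rather than a gap in the argument.
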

  \begin{proof}
   As in the proof of Lemma \ref{condition for the recurrence}, half of the cases for the elements of $\mathcal{V}$ are needless to consider.
   Now $A_1^{-1}$ has three eigenvalues
   \begin{align*}
    \tau_1=1.3247\cdots,\ \tau_2=-0.6623\cdots+0.5622\cdots i,\ \tau_3=-0.6623\cdots-0.5622\cdots i.
   \end{align*}
   For these eigenvalues, the elements of $\mathcal{V}$ (the half is omitted) and $\mathcal{P}$ are decomposed into the eigenspaces as below:
   \footnotesize
   \begin{align*}
    &v_1=\begin{pmatrix}
     1 \\ 1 \\ 0 
    \end{pmatrix}=\begin{pmatrix}
                   5.0493\cdots \\ 3.6579\cdots \\ 12.3205\cdots 
                  \end{pmatrix}+
         	  \begin{pmatrix}
                   -2.0246\cdots+19.8931\cdots i \\ -1.3289\cdots-10.1317\cdots i \\ -6.1602\cdots+77.7922\cdots i
                  \end{pmatrix}+
	 	  \begin{pmatrix}
                   -2.0246\cdots-19.8931\cdots i \\ -1.3289\cdots+10.1317\cdots i \\ -6.1602\cdots-77.7922\cdots i
                  \end{pmatrix}\\
    &v_2=\begin{pmatrix}
     0 \\ 1 \\ 1 
    \end{pmatrix}=\begin{pmatrix}
                   -3.1608\cdots \\ -2.2898\cdots \\ -7.7125\cdots 
                  \end{pmatrix}+
		  \begin{pmatrix}
                   1.5804\cdots-20.7021\cdots i \\ 1.6449\cdots+10.4819\cdots i \\ 4.3562\cdots-80.9097\cdots i
                  \end{pmatrix}+
   		  \begin{pmatrix}
                   1.5804\cdots+20.7021\cdots i \\ 1.6449\cdots-10.4819\cdots i \\ 4.3562\cdots+80.9097\cdots i
                  \end{pmatrix}\\
    &v'_1=\begin{pmatrix}
     -1 \\ -1 \\ -1 
    \end{pmatrix}=\begin{pmatrix}
                   -3.2374\cdots \\ -2.3453\cdots \\ -7.8995\cdots 
                  \end{pmatrix}+
		  \begin{pmatrix}
                   1.1187\cdots-10.4669\cdots i \\ 0.6726\cdots+5.3371\cdots i \\ 3.4497\cdots-40.9357\cdots i
                  \end{pmatrix}+
   		  \begin{pmatrix}
                   1.1187\cdots+10.4669\cdots i \\ 0.6726\cdots-5.3371\cdots i \\ 3.4497\cdots+40.9357\cdots i
                  \end{pmatrix}\\
    &v'_2=\begin{pmatrix}
     1 \\ 0 \\ 0 
    \end{pmatrix}=\begin{pmatrix}
                   6.3982\cdots \\ 4.6351\cdots \\ 15.6121\cdots 
                  \end{pmatrix}+
		  \begin{pmatrix}
                   -2.6991\cdots+31.1691\cdots i \\ -2.3175\cdots-15.8191\cdots i \\ -7.8060\cdots+121.8454\cdots i
                  \end{pmatrix}+
   		  \begin{pmatrix}
                   -2.6991\cdots-31.1691\cdots i \\ -2.3175\cdots+15.8191\cdots i \\ -7.8060\cdots-121.8454\cdots i
                  \end{pmatrix}\\
    &v'_3=\begin{pmatrix}
     0 \\ 1 \\ 0 
    \end{pmatrix}=\begin{pmatrix}
                   -1.3489\cdots \\ -0.9772\cdots \\ -3.2915\cdots 
                  \end{pmatrix}+
		  \begin{pmatrix}
                   0.6744\cdots-11.2759\cdots i \\ 0.9886\cdots+5.6873\cdots i \\ 1.6457\cdots-44.0532\cdots i
                  \end{pmatrix}+
   		  \begin{pmatrix}
                   0.6744\cdots+11.2759\cdots i \\ 0.9886\cdots-5.6873\cdots i \\ 1.6457\cdots+44.0532\cdots i
                  \end{pmatrix}\\
    &v'_4=\begin{pmatrix}
     0 \\ 0 \\ 1 
    \end{pmatrix}=\begin{pmatrix}
                   -1.8118\cdots \\ -1.3125\cdots \\ -4.4210\cdots 
                  \end{pmatrix}+
         	  \begin{pmatrix}
                   0.9059\cdots-9.4262\cdots i \\ 0.6562\cdots+4.7945\cdots i \\ 2.7105\cdots-36.8565\cdots i
                  \end{pmatrix}+
   	          \begin{pmatrix}
                   0.9059\cdots+9.4262\cdots i \\ 0.6562\cdots-4.7945\cdots i \\ 2.7105\cdots+36.8565\cdots i
                  \end{pmatrix}
   \end{align*}
   \normalsize
   We should only prove that the recurrence of the points by $A_1^{-1}$ satisfies
   \begin{align*}
    (\ast)\  x\neq0 , y\neq0 , z\neq0 , x\neq y , y\neq z \text{ and } z\neq x.
   \end{align*}
   Here, we prove only for $v_1$.
   The equation
   \footnotesize
   \begin{align*}
    A_1^{-n}(v_1)=\begin{pmatrix}
                 X_n \\ Y_n \\ Z_n 
                \end{pmatrix}=\tau_1^n\begin{pmatrix}
                                    5.0493\cdots \\ 3.6579\cdots \\ 12.3205\cdots 
                                   \end{pmatrix}
         	             +\tau_2^n\begin{pmatrix}
                                    -2.0246\cdots+19.8931\cdots i \\ -1.3289\cdots-10.1317\cdots i \\ -6.1602\cdots+77.7922\cdots i
                                   \end{pmatrix}
			     +\tau_3^n\begin{pmatrix}
                                    -2.0246\cdots-19.8931\cdots i \\ -1.3289\cdots+10.1317\cdots i \\ -6.1602\cdots-77.7922\cdots i
                                   \end{pmatrix}                
   \end{align*}
   \normalsize
   means that the coordinates in the first-term growth to $\pm\infty$, and the coordinates in the other terms converge to $0$.
   Write
   \footnotesize
   \begin{gather*}
    \tau_1^n\begin{pmatrix}
        5.0493\cdots \\ 3.6579\cdots \\ 12.3205\cdots 
       \end{pmatrix}=\begin{pmatrix}
                      x_n \\ y_n \\ z_n 
                     \end{pmatrix},\\
    \tau_2^n\begin{pmatrix}
          -2.0246\cdots+19.8931\cdots i \\ -1.3289\cdots-10.1317\cdots i \\ -6.1602\cdots+77.7922\cdots i
         \end{pmatrix}=\begin{pmatrix}
                        x'_n \\ y'_n \\ z'_n 
                       \end{pmatrix},
    \tau_3^n\begin{pmatrix}
          -2.0246\cdots-19.8931\cdots i \\ -1.3289\cdots+10.1317\cdots i \\ -6.1602\cdots-77.7922\cdots i
         \end{pmatrix} =\begin{pmatrix}
                         x''_n \\ y''_n \\ z''_n 
                        \end{pmatrix}
   \end{gather*}
   \normalsize
   with $X_n=x_n+x'_n+x''_n$, $Y_n=y_n+y'_n+y''_n$, $Z_n=z_n+z'_n+z''_n$ and then
   \begin{align*}
    \min\{\abs{x_n},\abs{y_n},\abs{z_n},\abs{x_n-y_n},\abs{y_n-z_n},\abs{z_n-x_n}\}=\abs{\tau_1}^n\cdot1.3913\cdots,\\
    \max\{\abs{x'_n},\abs{y'_n},\abs{z'_n},\abs{x'_n-y'_n},\abs{y'_n-z'_n},\abs{z'_n-x'_n}\}=\abs{\tau_2}^n\cdot88.0565\cdots\\
    \max\{\abs{x''_n},\abs{y''_n},\abs{z''_n},\abs{x''_n-y''_n},\abs{y''_n-z''_n},\abs{z''_n-x''_n}\}=\abs{\tau_3}^n\cdot88.0565\cdots
   \end{align*}
   and so for $n\geq12$,
   \begin{align*} 
    \min\{\abs{X_n},\abs{Y_n},\abs{Z_n},\abs{X_n-Y_n},\abs{Y_n-Z_n},\abs{Z_n-X_n}\}\geq8.0500\cdots>0.
   \end{align*}
   The case for $n\leq11$ can be checked by calculation, and so the condition ($\ast$) is true for $A_1^{-n}(v_1)$ and the others are confirmed similarly.
  \end{proof}
  The first few terms of the sequences $A_1^{-k}v$ for $v\in\mathcal{V}$ are as below ($k=1,2,3,\ldots$):
  \footnotesize
  \begin{align*}
   &A_1^{-k}v_1=\begin{pmatrix}
               -13 \\ 18 \\ -63 
              \end{pmatrix},\begin{pmatrix}
               38 \\ -9 \\ 136 
              \end{pmatrix},\begin{pmatrix}
               -12 \\ 19 \\ -63
              \end{pmatrix},\begin{pmatrix}
               25 \\ 9 \\ 73
              \end{pmatrix},\begin{pmatrix}
               26 \\ 10 \\ 73
              \end{pmatrix},\begin{pmatrix}
               13 \\ 28 \\ 10
              \end{pmatrix},\begin{pmatrix}
               51 \\ 19 \\ 146
              \end{pmatrix},\begin{pmatrix}
               39 \\ 38 \\ 83
              \end{pmatrix},\begin{pmatrix}
               64 \\ 47 \\ 156
              \end{pmatrix},\begin{pmatrix}
               90 \\ 57 \\ 229
              \end{pmatrix},
	      \ldots\\
   &A_1^{-k}v_2=\begin{pmatrix}
               17 \\ -17 \\ 75 
              \end{pmatrix},\begin{pmatrix}
               -36 \\ 12 \\ -133
              \end{pmatrix},\begin{pmatrix}
               17 \\ -16 \\ 76
              \end{pmatrix},\begin{pmatrix}
               -19 \\ -5 \\ -58
              \end{pmatrix},\begin{pmatrix}
               -19 \\ -4 \\ -57
              \end{pmatrix},\begin{pmatrix}
               -2 \\ -21 \\ 18
              \end{pmatrix},\begin{pmatrix}
               -38 \\ -9 \\ -115
              \end{pmatrix},\begin{pmatrix}
               -21 \\ -25 \\ -39
              \end{pmatrix},\begin{pmatrix}
               -40 \\ -30 \\ -97
              \end{pmatrix},%\begin{pmatrix}
               %-59 \\ -34 \\ -154
              %\end{pmatrix}
	      \ldots\\
   &A_1^{-k}(-v_1)=\begin{pmatrix}
               13 \\ -18 \\ 63 
              \end{pmatrix},\begin{pmatrix}
               -38 \\ 9 \\ -136 
              \end{pmatrix},\begin{pmatrix}
               12 \\ -19 \\ 63
              \end{pmatrix},\begin{pmatrix}
               -25 \\ -9 \\ -73
              \end{pmatrix},\begin{pmatrix}
               -26 \\ -10 \\ -73
              \end{pmatrix},\begin{pmatrix}
               -13 \\ -28 \\ -10
              \end{pmatrix},\begin{pmatrix}
               -51 \\ -19 \\ -146
              \end{pmatrix},\begin{pmatrix}
               -39 \\ -38 \\ -83
              \end{pmatrix},\begin{pmatrix}
               -64 \\ -47 \\ -156
              \end{pmatrix},%%\begin{pmatrix}
               %-90 \\ -57 \\ -229
              %\end{pmatrix}
	      \ldots\\
   &A_1^{-k}(-v_2)=\begin{pmatrix}
               -17 \\ 17 \\ -75 
              \end{pmatrix},\begin{pmatrix}
               36 \\ -12 \\ 133
              \end{pmatrix},\begin{pmatrix}
               -17 \\ 16 \\ -76
              \end{pmatrix},\begin{pmatrix}
               19 \\ 5 \\ 58
              \end{pmatrix},\begin{pmatrix}
               19 \\ 4 \\ 57
              \end{pmatrix},\begin{pmatrix}
               2 \\ 21 \\ -18
              \end{pmatrix},\begin{pmatrix}
               38 \\ 9 \\ 115
              \end{pmatrix},\begin{pmatrix}
               21 \\ 25 \\ 39
              \end{pmatrix},\begin{pmatrix}
               40 \\ 30 \\ 97
              \end{pmatrix},\begin{pmatrix}
               59 \\ 34 \\ 154
              \end{pmatrix},
	      \ldots
  \end{align*}
  \normalsize
  Similar to the proof of Lemma \ref{condition for the recurrence2}, we can confirm that $\langle u,A_1^{-k}v_1\rangle$ are all maximized by $u={\footnotesize\begin{pmatrix}0 \\ 0 \\ 0\end{pmatrix}}$ for all $k\geq4$, $\langle u,A_1^{-k}v_2\rangle$ are all maximized by $u={\footnotesize\begin{pmatrix}0 \\ 0 \\ -1\end{pmatrix}}$ for all $k\geq7$, $\langle u,A_1^{-k}(-v_1)\rangle$ are all maximized by $u={\footnotesize\begin{pmatrix}0 \\ 0 \\ -1\end{pmatrix}}$ for all $k\geq7$ and $\langle u,A_1^{-k}(-v_2)\rangle$ are all maximized by $u={\footnotesize\begin{pmatrix}0 \\ 0 \\ 0\end{pmatrix}}$ for all $k\geq7$.
  Thus,
  \begin{align*}
   &\max_{u\in\mathcal{U}}\langle u,A_1^{-k}v_1\rangle=63,9,63,0,0,0,0,0,0,0,\ldots(k=1,2,3\ldots)\\
   &\max_{u\in\mathcal{U}}\langle u,A_1^{-k}v_2\rangle=17,133,16,58,57,21,115,39,97,154,\ldots(k=1,2,3,\ldots)\\
   &\max_{u\in\mathcal{U}}\langle u,A_1^{-k}(-v_1)\rangle=18,136,19,73,73,28,146,83,156,229,\ldots(k=1,2,3,\ldots)\\
   &\max_{u\in\mathcal{U}}\langle u,A_1^{-k}(-v_2)\rangle=75,12,76,0,0,18,0,0,0,0,\ldots(k=1,2,3,\ldots)
  \end{align*}
  and for $k\geq10$, these sequences have recurrence relations with $a_k=a_{k-2}+a_{k-3}$, associated with $x^3-x-1$, the characteristic polynomial of $A_1^{-1}$.
  Thus, for $k\geq10$, $P_k=\Psi_{\mathcal{U},\mathcal{V}}(A_1^{-k})$ is also a linear recurrence that satisfies $P_k=P_{k-2}+P_{k-3}$, and the first few terms are
  \begin{align*}
   173,290,174,131,130,67,261,122,253,383,\ldots.
  \end{align*}
  Now $\lambda''=\lambda_1(f_{A_1^{-1}})$ satisfies the equation
  \begin{align*}
   1=\sum_{n=1}^{\infty} \frac{P_n}{{\lambda''}^n}
  \end{align*}
  by Proposition \ref{equation for dynamical degree} and this implies
  \begin{align*}
   &\lambda''-173=\sum_{n=1}^{\infty} \frac{P_{n+1}}{{\lambda''}^n}\\
   &{\lambda''}^2-173\lambda''-290=\sum_{n=1}^{\infty} \frac{P_{n+2}}{{\lambda''}^n}\\
   &{\lambda''}^3-173{\lambda''}^2-290\lambda''-174=\sum_{n=1}^{\infty} \frac{P_{n+3}}{{\lambda''}^n}
  \end{align*}
  By using the equation $P_{n+3}=P_{n+1}+P_n$ for $n\geq7$,
  \begin{align*}
   {\lambda''}^3-173{\lambda''}^2-290\lambda''-174=(\lambda''-173)+1-\left(\frac{332}{\lambda''}+\frac{334}{{\lambda''}^2}+\frac{238}{{\lambda''}^3}+\frac{75}{{\lambda''}^5}+\frac{75}{{\lambda''}^6}\right)
  \end{align*}
  and so
  \begin{align*}
   {\lambda''}^9-173{\lambda''}^8-291{\lambda''}^7-2{\lambda''}^6+332{\lambda''}^5+334{\lambda''}^4+238{\lambda''}^3+75\lambda''+75=0
  \end{align*}
  Thus,
  \begin{align*}
   \lambda''=174.6660\cdots.
  \end{align*}
  To conclude,
  \begin{align*}
   \lambda_0(f_{A_1})=1,\ 291\leq\lambda_1(f_{A_1})\leq669,\ \lambda_2(f_{A_1})=174.6660\cdots,\ \lambda_3(f_{A_1})=1
  \end{align*}
  and $\lambda_1(f_{A_1})$ is transcendental.
  Thus, $f_{A_1}$ is $1$-cohomologically hyperbolic, and so by Theorem \ref{MW22}, there exists a point $x\in\mathbb{P}^3(\overline{\mathbb{Q}})_{f_{A_1}}$, with Zariski dense forward orbit, such that $\alpha_{f_{A_1}}(x)$ exists, and this equals to $\lambda_1(f_{A_1})$.
  Thus, $\alpha_f(x)$ is transcendental, and Theorem \ref{Main theorem} and Corollary \ref{corollary of Main theorem} for the case $d=3$ are proved.
  The rest of Corollary \ref{corollary of Main theorem} is stated below again.
  \begin{corollary}\label{transcendental arithmetic degree}
   For any $d\geq3$, there exist a birational map $\phi:\mathbb{P}^d\dashrightarrow\mathbb{P}^d$ over $\overline{\mathbb{Q}}$ and a point $P\in\mathbb{P}^{d}(\overline{\mathbb{Q}})_\phi$ such that its arithmetic degree $\alpha_\phi(P)$ is transcendental.
  \end{corollary}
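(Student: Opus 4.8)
For $d=3$ there is nothing new to do: the map $\phi=f_{A_1}$ is $1$-cohomologically hyperbolic with $\lambda_1$ transcendental, so Theorem \ref{MW22} gives $P\in\mathbb{P}^3(\overline{\mathbb{Q}})_{f_{A_1}}$ with $\alpha_{f_{A_1}}(P)=\lambda_1(f_{A_1})$ transcendental and $\overline{\mathcal{O}_{f_{A_1}}(P)}=\mathbb{P}^3$. For $d\geq 4$ the plan is to extend $f_{A_1}$ to $\mathbb{P}^d$ by the identity on the extra coordinates: writing $f_{A_1}=[\phi_0:\phi_1:\phi_2:\phi_3]$ with the $\phi_i$ homogeneous of degree $e:=\deg(f_{A_1})$ in $x_0,\dots,x_3$ and with no common factor, I would set
\begin{align*}
 \phi\colon\mathbb{P}^d\dashrightarrow\mathbb{P}^d,\qquad [x_0:\cdots:x_d]\mapsto[\phi_0:\phi_1:\phi_2:\phi_3:x_0^{e-1}x_4:\cdots:x_0^{e-1}x_d],
\end{align*}
and build the required point from the $d=3$ point above.

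One checks that $\phi$ is birational by exhibiting its inverse (recover $[x_0:x_1:x_2:x_3]$ from the first four coordinates via $f_{A_1}^{-1}$, then recover $x_4,\dots,x_d$ from the remaining coordinates divided by $x_0^{e-1}$ and suitably rescaled). In the chart $\{x_0\neq0\}\cong\mathbb{A}^3\times\mathbb{A}^{d-3}$ the map $\phi$ is $(u,v)\mapsto(f_{A_1}(u),v)$, so $\phi$ is birationally conjugate to $f_{A_1}\times\mathrm{id}_{\mathbb{P}^{d-3}}$ on $\mathbb{P}^3\times\mathbb{P}^{d-3}$; by Theorem \ref{birational conjugate} and the product formula for dynamical degrees, $\lambda_1(\phi)=\lambda_1(f_{A_1})$ is transcendental, while $\lambda_2(\phi)=\max\{\lambda_2(f_{A_1}),\lambda_1(f_{A_1})\}=\lambda_1(f_{A_1})$ as well, so $\phi$ is \emph{not} $1$-cohomologically hyperbolic and Theorem \ref{MW22} does not apply directly.

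So I would argue directly. Let $x\in\mathbb{P}^3(\overline{\mathbb{Q}})_{f_{A_1}}$ be the point from Theorem \ref{MW22}, so that $\alpha_{f_{A_1}}(x)=\lambda_1(f_{A_1})$; fix any $c\in\overline{\mathbb{Q}}^{\,d-3}$ and let $P\in\mathbb{P}^d(\overline{\mathbb{Q}})$ be the point with affine coordinates $(x,c)$ in the chart $\{x_0\neq0\}$. Granting that $P\in\mathbb{P}^d(\overline{\mathbb{Q}})_\phi$ and that the forward orbit of $x$ misses $\{x_0=0\}$, one has $\phi^n(P)=(f_{A_1}^n(x),c)$ for all $n$, and functoriality of Weil heights along the closed immersion $y\mapsto(y,c)$ (equivalently, along the Segre embedding of $\mathbb{P}^3\times\mathbb{P}^{d-3}$) gives
\begin{align*}
 h_{\mathbb{P}^d}\!\big(\phi^n(P)\big)=h_{\mathbb{P}^3}\!\big(f_{A_1}^n(x)\big)+O(1),
\end{align*}
with the implied constant depending only on $c$ and on the chosen height functions; hence $h_{\mathbb{P}^d}^{+}(\phi^n(P))^{1/n}\to\alpha_{f_{A_1}}(x)=\lambda_1(f_{A_1})$, so $\alpha_\phi(P)$ exists and is transcendental.

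The hard part is exactly the granting clause above: one must rule out that some iterate of $P$ meets the indeterminacy locus of $\phi$ or the hyperplane $\{x_0=0\}$ where the affine model degenerates. I expect the clean way around this is to work from the start with the honest product model $Y=\mathbb{P}^3\times\mathbb{P}^{d-3}$ and $\psi=f_{A_1}\times\mathrm{id}$, for which $Q=(x,c)\in Y(\overline{\mathbb{Q}})_\psi$ and the product-height estimate above hold unconditionally, and then transport the conclusion to $\mathbb{P}^d$ via a birational map $\pi\colon Y\dashrightarrow\mathbb{P}^d$ chosen so that the irreducible surface $\overline{\mathcal{O}_\psi(Q)}=\mathbb{P}^3\times\{c\}$ and the orbit of $Q$ avoid the indeterminacy loci of $\pi$ and $\pi^{-1}$; then $\alpha_\phi(\pi(Q))=\alpha_\psi(Q)=\lambda_1(f_{A_1})$ is transcendental, which proves the corollary. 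Alternatively, the "bad" locus inside the slice $\mathbb{P}^3\times\{c\}$ is a fixed proper closed subset, and the freedom in the choice of $x$ afforded by Theorem \ref{MW22} can be used to keep the whole orbit out of it.
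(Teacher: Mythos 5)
Your argument is essentially the paper's. After the detour through the ad hoc affine-chart formula with the $x_0^{e-1}$ factors (which you yourself discard), you land exactly on the product model $\psi=f_{A_1}\times\mathrm{id}$ on $\mathbb{P}^3\times\mathbb{P}^{d-3}$ and then transport to $\mathbb{P}^d$ by a birational map; the paper does the same, using the explicit map
\begin{align*}
([x_0:x_1:x_2:x_3],[y_0:\cdots:y_{d-3}])\mapsto[x_0y_0:x_1y_0:x_2y_0:x_3y_0:x_3y_1:\cdots:x_3y_{d-3}],
\end{align*}
and takes $P$ to be the image of $(x,[1:\cdots:1])$. Your observations that $\phi$ is not $1$-cohomologically hyperbolic for $d\geq4$ (so Theorem \ref{MW22} cannot be invoked on $\mathbb{P}^d$ directly) and that the dynamical degree of $\phi$ is still $\lambda_1(f_{A_1})$ match the paper's Remark \ref{maximal dynamical degree is transcendental}. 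The one point where you are more explicit than the paper is the worry that some iterate of $P$ could meet the indeterminacy locus of the conjugating birational map (or of $\phi$ itself), which would break the identification of orbits and the height comparison; the paper simply asserts that the orbit of $P$ is well-defined and that $\alpha_\phi(P)=\alpha_f(x)$ follows from the definition of the Weil height, without checking this. You flag the issue but also leave it as an expectation rather than a proof, so in the end both arguments rest on the same unverified step; a complete treatment would either check that the orbit of $x$ under $f_{A_1}$ avoids $\{x_3=0\}$ (the locus over which the paper's $\pi^{-1}$ degenerates), or, as you suggest, use the freedom in the choice of $x$ from Theorem \ref{MW22} to arrange this.
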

  \begin{proof}
   By Theorem \ref{Main theorem}, there exist a birational map $f:\mathbb{P}^3\dashrightarrow\mathbb{P}^3$ and a point $x\in\mathbb{P}^3(\overline{\mathbb{Q}})_{f}$ with transcendental arithmetic degree $\alpha_f(x)$.
   $f$ induces a birational map
   \begin{align*}
   \begin{array}{cccc}
    \tilde{\phi}=f\times \id:&\mathbb{P}^3\times\mathbb{P}^{d-3}&\dashrightarrow&\mathbb{P}^3\times\mathbb{P}^{d-3}\\
    & ([x_0:x_1:x_2:x_3],\ [1:\cdots:1]) & \mapsto & (f([x_0:x_1:x_2:x_3]),\ [1:\cdots:1])
   \end{array}
   \end{align*}
   and the forward orbit of $(x,\ [1:\cdots:1])$ by $\tilde{\phi}$ is well-defined.
   The birational map
   \begin{align*}
   \begin{array}{cccc}
    &\mathbb{P}^3\times\mathbb{P}^{d-3}&\dashrightarrow&\mathbb{P}^d\\
    & ([x_0:x_1:x_2:x_3],\ [y_0:\cdots:y_{d-3}]) & \mapsto & [x_0y_0:x_1y_0:x_2y_0:x_3y_0:x_3y_1:\cdots:x_3y_{d-3}]
   \end{array}
   \end{align*}
   induces the birational map $\phi:\mathbb{P}^d\dashrightarrow\mathbb{P}^d$.\par
   Now the forward orbit of the point $P\in\mathbb{P}^d$, which corresponds to $(x,\ [1:\cdots:1])\in\mathbb{P}^3\times\mathbb{P}^{d-3}$, is well-defined.\par
   By the definition of a Weil height and the arithmetic degree, $\alpha_\phi(P)=\alpha_f(x)$ holds, and it is transcendental.
  \end{proof}
  \begin{remark}\label{maximal dynamical degree is transcendental}
   The last part of the proof is also proved by using \cite[Lemma 3.3]{Sil17} and \cite[Theorem 3.4]{MSS18}.
   Also, in the above proof, Theorem \ref{birational conjugate} and \cite[Lemma 3.3]{Sil17} imply the first dynamical degree of $\phi$ is calculated as
   \begin{align*}
    \lambda_1(\phi)=\lambda_1(\tilde{\phi})=\lambda_1(f\times id)=\max\{\lambda_1(f),1\}=\lambda_1(f).
   \end{align*}\par
   Thus, by constructing the map $\phi$ from $f=f_{A_1}$ in Section \ref{1-cohomologically hyperbolic birational map}, we can make $\phi$ as $\lambda_1(\phi)=\lambda_1(f_{A_{1}})=\alpha_\phi(P)$ in Corollary \ref{transcendental arithmetic degree}.
   Denote $\mu=\lambda_1(f_{A_1})$, $\nu=\lambda_2(f_{A_1})$, and by using \cite[Theorem 1.1]{DN11} as in \cite[Example 3.7]{DN11}, \cite[Corollary 2.5]{San20}, the dynamical degrees of $\phi$ are calculated as
   \begin{align*}
    \lambda_0(\phi)=1,\ \lambda_1(\phi)=\mu,\ldots,\lambda_{d-2}(\phi)=\mu,\ \lambda_{d-1}(\phi)=\nu,\ \lambda_{d}(\phi)=1
   \end{align*}
   by the inequality $1=\lambda_0(f_{A_1})<\lambda_1(f_{A_1})>\lambda_2(f_{A_1})>\lambda_3(f_{A_1})=1$ (and now, $\mu$ is transcendental, and $\phi$ is not cohomologically hyperbolic for $d>3$).
  \end{remark}
  \begin{remark}\label{simultaneously}
   Both the examples in Section \ref{2-cohomologically hyperbolic birational map} and Section \ref{1-cohomologically hyperbolic birational map} have an algebraic second dynamical degree.
   Define the birational map $f_A:=L_{B^{-1}}\circ h_{-I}\circ L_{B}\circ h_A$ for $A\in\mathrm{GL}_{3}(\mathbb{Z})$ and $B$ as in Proposition \ref{equation for dynamical degree}.
   Assume $A$ satisfies the condition in Proposition \ref{equation for dynamical degree}, with the irreducible characteristic polynomial $p(x)\in\mathbb{Z}[x]$.
   Let $z_1,z_2$ and $z_3$ be the roots of $p(x)$, which are ordered as $\abs{z_1}\geq\abs{z_2}\geq\abs{z_3}$ with $\abs{z_1z_2z_3}=1$.\par
   As well as the computation in Section \ref{2-cohomologically hyperbolic birational map} and Section \ref{1-cohomologically hyperbolic birational map}, $\lambda_1(f_A)$ is algebraic if $\abs{z_1}>\abs{z_2}\geq\abs{z_3}$.\par
   The method in this paper only consider the case that both $A$ and $A^{-1}$ satisfy the condition in Proposition \ref{equation for dynamical degree}, and this means that one of $\lambda_1(f_A)$ and $\lambda_2(f_A)$ is algebraic.
  \end{remark}

\renewcommand{\refname}{References}

\end{document}